\newtheorem{thm}{Theorem}[section]
\newtheorem*{thm*}{Theorem}
\newtheorem{lem}[thm]{Lemma}
\theoremstyle{definition}
\newtheorem*{defn*}{Definition}
\newtheorem*{rem*}{Remark}
\numberwithin{equation}{section}
\definecolor{OrangeRed}{cmyk}{0,0.6,1,0}            
\definecolor{DarkBlue}{cmyk}{1,1,0,0.20}
\definecolor{DarkGreen}{cmyk}{1,0,0.6,0.2}
\definecolor{myblue}{rgb}{0.66,0.78,1.00}
\definecolor{Violet}{cmyk}{0.19,0.98,0.1,0}
\definecolor{Lavender}{cmyk}{0,0.48,0,0}
\renewcommand{\epsilon}{\varepsilon}
\renewcommand{\phi}{\varphi}
\title{Dynamics inside Fatou sets in higher dimensions}
\author{   
	Mi Hu \\}
\begin{document}

	\maketitle  
	\begin{abstract}
		In this paper, we investigate the behavior of orbits inside attracting basins in higher dimensions. Suppose $F(z, w)=(P(z), Q(w))$, where $P(z), Q(w)$ are two polynomials of degree $m_1, m_2\geq2$ on $\mathbb{C}$,
		$P(0)=Q(0)=0,$ and $0<|P'(0)|, |Q'(0)|<1.$  Let $\Omega$ be the immediate attracting basin of $F(z, w)$. 
		Then there is a constant $C$ such that for every point $(z_0, w_0)\in \Omega$, there exists a point $(\tilde{z}, \tilde{w})\in \cup_k F^{-k}(0, 0), k\geq0$ so that $d_\Omega\big((z_0, w_0), (\tilde{z}, \tilde{w})\big)\leq C,  d_\Omega$ is the Kobayashi distance on $\Omega$. However, for many other cases, this result is invalid. 
		\\
		
		

	\end{abstract}

	\section{Introduction}\label{sec1}
		In discrete dynamical systems, we are interested in  qualitatively and quantitatively describing the possible dynamical behavior under the iteration of maps satisfying certain conditions. In our paper \cite{RefH1}, Hu studied the dynamics of holomorphic polynomials on attracting basins and obtained Theorem \ref{thmA}:
		
		\begin{thm}\label{thmA} [Hu, 2022]
			Suppose $f(z)$ is a polynomial of degree $N\geq 2$ on $\mathbb{C}$, $p$ is an attracting fixed point of $f(z),$ $\Omega_1$ is the immediate basin of attraction of $p$, $\{f^{-1}(p)\}\cap \Omega_1\neq\{p\}$,
			$\mathcal{A}(p)$ is the basin of attraction of $p$, $\Omega_i (i=1, 2, \cdots)$ are the connected components of $\mathcal{A}(p)$. Then there is a constant $\tilde{C}$ so that for every point $z_0$ inside any $\Omega_i$, there exists a point $q\in \cup_k f^{-k}(p)$ inside $\Omega_i$ such that $d_{\Omega_i}(z_0, q)\leq \tilde{C}$, where $d_{\Omega_i}$ is the Kobayashi distance on $\Omega_i.$  
		\end{thm}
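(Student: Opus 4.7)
The approach is to combine the Koenigs linearization of $f$ near $p$ with a fundamental-annulus argument, transferring a distance bound back through inverse branches of iterates of $f$.

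First, since $f^{-1}(p) \cap \Omega_1 \neq \{p\}$, the restriction $f|_{\Omega_1}$ is a proper holomorphic self-map of degree $d \geq 2$. With $\lambda = f'(p)$ and $0 < |\lambda| < 1$, Koenigs' theorem yields a local coordinate $\phi$ on a neighborhood of $p$ satisfying $\phi(p) = 0$ and $\phi \circ f = \lambda \phi$. For $r > 0$ small, $B_r := \phi^{-1}(\{|w| < r\})$ is a topological disk relatively compact in $\Omega_1$, and the fundamental annulus $A_r := B_r \setminus f(B_r) = \phi^{-1}(\{|\lambda| r \leq |w| < r\})$ has closure compact in $B_r$. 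Set $M := \diam_{d_{B_r}}(\overline{A_r} \cup \{p\}) < \infty$.

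Given $z_0 \in \Omega_i$ not already in $\bigcup_k f^{-k}(p)$, the convergence $f^k(z_0) \to p$ provides a unique minimal $n = n(z_0) \geq 0$ with $f^n(z_0) \in \overline{A_r}$. Let $W_n$ denote the connected component of $f^{-n}(B_r)$ containing $z_0$; by connectedness, $W_n \subset \Omega_i$. The map $f^n : W_n \to B_r$ is a proper holomorphic (possibly branched) covering, and its fibre over $p$ consists of points in $\Omega_i \cap \bigcup_k f^{-k}(p)$. Using monotonicity of the Kobayashi distance under inclusion $W_n \subset \Omega_i$, it suffices to bound $d_{W_n}(z_0, q)$ for some preimage $q$. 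When $f^n|_{W_n}$ has no critical points it is an unbranched covering map between hyperbolic Riemann surfaces, hence a local Kobayashi isometry, and lifting a length-minimizing curve from $f^n(z_0)$ to $p$ in $B_r$ yields $q$ with $d_{W_n}(z_0, q) = d_{B_r}(f^n(z_0), p) \leq M$, giving $\tilde{C} = M$.

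The main obstacle is that $f|_{\Omega_1}$ typically has critical points in $\Omega_1$: if $\Omega_1$ is simply connected, then the Riemann--Hurwitz formula forces exactly $d-1$ branch points. For large $n$ the covering $f^n|_{W_n}$ is therefore ramified, and $f^n$ strictly contracts the Kobayashi metric at its critical points, so lifting paths can strictly increase Kobayashi length and destroy the naive bound. Since $f$ is a polynomial, its critical set in $\Omega_1$ is finite. I would handle the ramification by shrinking $r$ so that the first-generation critical values $f(c_j)$ lie outside $\overline{B_r}\setminus\{p\}$, then track higher-iterate critical values $f^k(c_j)$ using their geometric convergence to $p$ in Koenigs coordinates. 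A Koebe-type distortion estimate near each critical point of $f^n$ in $W_n$ (with local degree bounded by that of the corresponding critical point of $f$), combined with the finiteness of the postcritical data, should yield a uniform additive correction $C_\ast$ depending only on the multiplicities of critical points of $f$ in $\Omega_1$, producing the theorem with $\tilde{C} := M + C_\ast$.
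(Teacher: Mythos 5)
This paper does not actually prove Theorem \ref{thmA}; it is quoted from \cite{RefH1} (Theorem 2.9 there), so your proposal can only be judged on its own terms, and on those terms it has a genuine gap at exactly the step that makes the theorem nontrivial. The reduction to lifting a short curve under $f^n\colon W_n\to B_r$ works when that map is an unbranched covering, but in the branched case your ``uniform additive correction $C_\ast$ depending only on the multiplicities of critical points of $f$ in $\Omega_1$'' is asserted, not proved, and nothing in the sketch makes it uniform in $n$. The critical points of $f^n|_{W_n}$ are the points of $W_n$ whose first $n$ iterates meet the critical set of $f$; by Riemann--Hurwitz their number (with multiplicity) is $\deg(f^n|_{W_n})-1$ when $W_n$ is simply connected, and this degree grows with $n$, while the branch values $f^{n-j}(c)$ accumulate at $p$ inside $B_r$ because the critical orbits converge to $p$ --- shrinking $r$ only postpones their arrival. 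Already for the model $z\mapsto z^d$ on the disk, the additive loss in pulling back hyperbolic distances to the fixed point is of order $\log d$, unbounded in the degree, and your argument contains no mechanism preventing the analogous loss for $f^n|_{W_n}$ from growing with $n$. Note also that the hypothesis $f^{-1}(p)\cap\Omega_1\neq\{p\}$ never genuinely enters your argument (for $0<|\lambda|<1$ it holds automatically, since the immediate basin contains a critical point and so $f|_{\Omega_1}$ is proper of degree at least $2$), yet it is precisely what excludes the counterexample $z^m$ of Theorem \ref{the1}; a proof in which this hypothesis plays no essential role cannot be complete.

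There are also fixable slips. The closure of $A_r=B_r\setminus f(B_r)$ is not compact in $B_r$ (it contains $\phi^{-1}(\{|w|=r\})\subset\partial B_r$), so $M=\diam_{d_{B_r}}\big(\overline{A_r}\cup\{p\}\big)$ is infinite as defined; the standard remedy is two radii $r<R$, measuring the first entrance into $B_r$ with the metric of $B_R$. An orbit can also jump from outside $B_r$ directly into $f(B_r)$ and never visit $A_r$, so ``the unique minimal $n$ with $f^n(z_0)\in\overline{A_r}$'' need not exist. Finally, Koenigs linearization requires $\lambda\neq 0$, whereas the statement allows a superattracting $p$ --- the only case in which the preimage hypothesis has content --- which would need B\"ottcher coordinates and a separate treatment.
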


		Theorem \ref{thmA} shows that in an attracting basin of a complex polynomial, the backward orbit of the attracting fixed point either is the point itself or accumulates at the boundary of all the components of the basin in such a way that all points of the basin lie within a uniformly bounded distance of the backward orbit, measured with respect to the Kobayashi metric. This is an interesting and innovative problem and result. There are no publications by other researchers.

		However,  Hu \cite{RefH2} proved that Theorem \ref{thmA} is no longer valid for parabolic basins of polynomials in one dimension. This is a more interesting and surprising result.
		
	Compared with one dimension, there are very interesting results about dynamics inside attracting basins in higher dimensions.
	Let $F(z, w)=(P(z), Q(w))$, where $P(z), Q(w)$ are two polynomials of degree $m_1, m_2\geq2$ on $\mathbb{C}$,
	and $F^{ n}: \mathbb{C}^2 \rightarrow \mathbb{C}^2$ be its $n$-fold iterate. In complex dynamics, two crucial disjoint invariant sets are associated with $F$, the {\sl Julia set} and the {\sl Fatou set} \cite{RefM}, which partition $\mathbb{C}^2$.
	The Fatou set of $F$ is defined as the largest open set where the family of iterates is locally normal. In other words, for any point $(z, w)\in \mathbb{C}^2$, there exists some neighborhood $U$ of $(z, w)$ so that the sequence of iterates of the map restricted to $U$ forms a normal family, so the iterates are well behaved. 
	The complement of the Fatou set is called the Julia set. 
	
  The classification of Fatou components in one dimension was completed in the '80s when Sullivan \cite{RefS} proved that every Fatou component of a rational map is preperiodic, i.e., there are $n,m \in \mathbb{N}$ such that $f^{n+m}(\Omega)=f^m(\Omega)$. For more details and results, we refer the reader to \cite{RefB, RefCG, RefM}. In higher dimensions, the dynamics is quite different and more fruitful than in $\mathbb{C}.$ We refer the reader to \cite{RefL, RefFS1, RefFS3, RefFS2} for more details and results.
	
	The connected components of the Fatou set of $F$ are called {\sl Fatou components}. 
	A Fatou component $\Omega\subset \mathbb{C}^2$ of $F$ is {\sl invariant} if $F(\Omega)=\Omega$. 
	For $(z, w)\in \mathbb{C}^2$, the set $\{(z_n, w_n)\}=\{(z_1, w_1)=F(z_0, w_0), (z_2, w_2)=F^2(z_0, w_0), \cdots\}$ is called the orbit of the point $(z, w)=(z_0, w_0)$. 
	If $(z_N, w_N)=(z_0, w_0)$ for some integer $N$, we say that $(z_0, w_0)$ is a periodic point of $F$ of periodic $N$.
	If $N=1$, then $(z_0, w_0)$ is a fixed point of $F.$ 
	There have been few detailed studies until now of the more precise behavior of orbits inside the Fatou set. 
	For example, let $\mathcal{A}(z', w'):=\{(z, w)\in \mathbb{C}^2; F^n(z, w)\rightarrow (z', w')\}$ be the {\sl basin of attraction} of an attracting fixed point $(z', w')$. 
	One can ask when $(z_0, w_0)$ is close to $\partial\mathcal{A}(z', w')$, what orbits $\{(z_n, w_n)\}$ of $(z_0, w_0)$ going from $(z_0, w_0)$ to near the attracting fixed point $(z', w')$ look like, or how many iterations are needed. 
	
	In this paper, we will investigate how the orbits behave inside the attracting basin of $F(z, w)$ in $\mathbb{C}^2$. We obtain the following, Theorem \ref{the2} in section 3:

	\begin{thm*}{\bf \ref{the2}.}
		Suppose $F(z, w)=(P(z), Q(w)),$ where $P(z), Q(w)$ are two polynomials of degree $m_1, m_2\geq2$ on $\mathbb{C},$
		$P(0)=Q(0)=0,$ and $0< |P'(0)|, |Q'(0)|<1.$  Let $\Omega$ be the immediate attracting basin of $F(z, w)$. 
		Then there is a constant $C$ such that for every point $(z_0, w_0)\in \Omega$, there exists a point $(\tilde{z}, \tilde{w})\in \cup_k F^{-k}(0, 0),  k\geq0$ so that $d_\Omega\big((z_0, w_0), (\tilde{z}, \tilde{w})\big)\leq C,  d_\Omega$ is the Kobayashi distance on $\Omega$. 
	\end{thm*}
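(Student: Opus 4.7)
The plan is to reduce the two-dimensional statement to the one-dimensional Theorem A by exploiting the product structure of $F=(P,Q)$. As a first step, I would establish that the immediate basin $\Omega$ of $(0,0)$ under $F$ coincides with $\Omega^P\times\Omega^Q$, where $\Omega^P$ and $\Omega^Q$ denote the immediate basins of $0$ for $P$ and $Q$ respectively on $\mathbb{C}$. Indeed, the full basin of attraction factors as $\mathcal{A}_F(0,0)=\mathcal{A}_P(0)\times\mathcal{A}_Q(0)$ because the dynamics decouples into independent one-variable systems, and the connected component of $(0,0)$ in a product of open sets is the product of the components of $0$ in each factor.

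Next, I would invoke the classical product formula for the Kobayashi pseudodistance,
\[
d_{X\times Y}\big((x_1,y_1),(x_2,y_2)\big)=\max\big\{d_X(x_1,x_2),\,d_Y(y_1,y_2)\big\},
\]
which holds on any product of complex manifolds. Both $\Omega^P$ and $\Omega^Q$ are hyperbolic Riemann surfaces (they are bounded Fatou components of polynomials of degree $\geq 2$), so this formula delivers $d_\Omega$ on $\Omega=\Omega^P\times\Omega^Q$ as the maximum of the two one-dimensional Kobayashi distances.

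Given $(z_0,w_0)\in\Omega=\Omega^P\times\Omega^Q$, I would apply Theorem A separately to $P$ on $\Omega^P$ and to $Q$ on $\Omega^Q$, producing constants $C_P,C_Q$ and preimages $\tilde z\in P^{-k_1}(0)\cap\Omega^P$, $\tilde w\in Q^{-k_2}(0)\cap\Omega^Q$ satisfying $d_{\Omega^P}(z_0,\tilde z)\leq C_P$ and $d_{\Omega^Q}(w_0,\tilde w)\leq C_Q$. Because $P^{-j}(0)\subset P^{-k}(0)$ whenever $j\leq k$ (and likewise for $Q$), choosing $k=\max(k_1,k_2)$ yields $(\tilde z,\tilde w)\in F^{-k}(0,0)$. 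The product formula then gives $d_\Omega\big((z_0,w_0),(\tilde z,\tilde w)\big)\leq\max(C_P,C_Q)$, so $C:=\max(C_P,C_Q)$ is the desired uniform constant.

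The substantive content lies in (i) the product decomposition of the immediate basin and (ii) the Kobayashi product formula; aligning the two exponents $k_1,k_2$ is a matter of bookkeeping using nesting of preimage sets. The main obstacle I foresee is verifying the hypothesis $\{f^{-1}(0)\}\cap\Omega_1\neq\{0\}$ of Theorem A for each factor: the conditions $P(0)=Q(0)=0$ with $0<|P'(0)|,|Q'(0)|<1$ force $0$ to be a simple root of each polynomial, and together with degree $\geq 2$ this should supply further preimages in the immediate basin in nondegenerate configurations, but one may need either an implicit additional assumption or a separate treatment of the degenerate case in which the only preimage of $0$ lying in the immediate basin of some factor is $0$ itself.
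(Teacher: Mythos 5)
Your proposal follows essentially the same route as the paper: decompose $\Omega=\Omega_P\times\Omega_Q$, apply the one-variable Theorem A to each factor, use the Kobayashi product formula $d_{X\times Y}=\max(d_X,d_Y)$, and align the preimage depths via $K=\max(k_1,k_2)$ with $C=\max(C_P,C_Q)$. The hypothesis of Theorem A that you flagged is in fact automatically satisfied here (and is likewise left implicit in the paper): since $0<|P'(0)|<1$ the fixed point $0$ is attracting but not superattracting, so the immediate basin $\Omega_P$ contains a critical point, $P\colon\Omega_P\to\Omega_P$ is proper of degree $d\geq 2$, and because $0$ is a simple root of $P$ there must be $d-1\geq1$ further preimages of $0$ inside $\Omega_P$; the same holds for $Q$.
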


	However, Theorem \ref{the2} is not valid for any of the following cases:
	
	Let $F(z, w)=(P(z), Q(w)),$ where $P(z), Q(w)$ are two polynomials of degree $m_1, m_2\geq2$ on $\mathbb{C}$ and $P(0)=Q(0)=0$.
	\begin{itemize} [itemsep=0pt]
		\item[(1)] $P(z)=z^{m_1},  Q(w)=w^{m_2}$;
		\item[(2)] $P(z)=z^m, 0<|Q'(0)|<1, $ i.e., $P'(0)=0;$
		\item[(3)] $P(z)=z^m, Q'(0)=1, $  i.e., $P'(0)=0;$
		\item[(4)] $0<|P'(0)|<1, Q'(0)=1;$
		\item[(5)] $P'(0)=Q'(0)=1$. 
	\end{itemize}
	
	Polynomial skew products, for example, \cite{RefJM, RefJ}, have been useful test cases for
	complex dynamics in two dimensions. This allows one to use one variable results in one of the variables:
	Suppose $F$ is a polynomial skew product, $F(z, w)=(P(z), Q(z, w)),$ where $P(z), Q(z, w)$ are two polynomials of degree $m_1, m_2\geq2$ on $\mathbb{C}$ and $P(0)=0, Q(0,0)=0$.
We will prove that, in the following cases in section 4, Theorem \ref{the2} fails as well: 
	
	\begin{itemize} [itemsep=0pt]
		\item[(1)] $P(z)=z^2, Q(z, w)=w^2+az, a\in\mathbb{C};$ 
		\item[(2)] $P(z)=az+z^2, Q(z, w)=w^2+cw+bz, 0<|a|, |b|, |c|<<1$ and $ |a|>>|c|, |a|>>|b|, |c|>>|ab|$.
	\end{itemize}

	\section*{Acknowledgement}

I appreciate my advisor John Erik Forn\ae ss very much for giving me this research problem and for his valuable comments and patient guidance. In addition, thanks very much to the employees at NTNU, Norway, for all their kind help during my visit. At last, I am very grateful to the University of Parma for supporting my Ph.D. study. 


%

	\section{Preliminaries}\label{subsec1}
	First of all, let us recall the definition of the Kobayashi metric, for example, \cite{RefA, RefBGN, RefK, RefW}.
	
	\begin{defn*}\label{def5}
		Let $\hat{\Omega}\subset\mathbb C^n$ be a domain. We choose a point $z\in \hat{\Omega}$ and a vector $\xi$ tangent to $\mathbb C^n$ at the point $z.$ Let $\triangle$ denote the unit disk in the complex plane $\mathbb C$.
		We define the {\em Kobayashi metric}
		$$
		F_{\hat{\Omega}}(z, \xi):=\inf\{\lambda>0 : \exists f: \triangle\stackrel{hol}{\longrightarrow} \hat{\Omega}, f(0)=z, \lambda f'(0)=\xi\}.
		$$

		Let $\gamma: [0, 1]\rightarrow \hat{\Omega}$ be a piecewise smooth curve.
		The {\em Kobayashi length} of $\gamma$ is defined to be 
		$$ L_{\hat{\Omega}} (\gamma)=\int_{\gamma} F_{\hat{\Omega}}(z, \xi) \lvert dz\rvert=\int_{0}^{1}F_{\hat{\Omega}}\big(\gamma(t), \gamma'(t)\big)\lvert \gamma'(t)\rvert dt.$$

		For any two points $z_1$ and $z_2$ in $\hat{\Omega}$, the {\em Kobayashi distance} between $z_1$ and $z_2$ is defined to be 
		$$d_{\hat{\Omega}}(z_1, z_2)=\inf\{L_{\hat{\Omega}} (\gamma): \gamma ~ \text{is a piecewise smooth curve connecting} ~z_1~ \text{and} ~z_2 \}.$$

		Note that $d_{\hat{\Omega}}(z_1, z_2)$ is defined when $z_1$ and $z_2$ are in the same connected component of $\hat{\Omega}.$

		Let $d_E(z_1, z_2)$ denote the Euclidean metric distance for any two points $z_1, z_2\in\triangle.$

	\end{defn*}
	
		\section{ Dynamics of holomorphic polynomials inside Fatou sets of $F(z, w)= (P(z), Q(w))$}

				In this section, we study how precisely the orbit goes inside attracting basins of $F(z, w)=(P(z), Q(w))$, where $P(z), Q(w)$ are two polynomials of degree $m_1, m_2\geq2$ on $\mathbb{C}$. 
				
			\subsection{Dynamics of $F(z, w)=(z^m,  w^m), m\geq2$, i.e., $|P'(0)|=|Q'(0)|=0$}

\begin{thm}\label{the1}
	Let $F(z, w)=(z^m,  w^m), m\geq2$. We choose an arbitrary constant $C>0$ and the point $(\epsilon,\epsilon), 0<\epsilon<<1$.  Then there exists a point $(z_0, w_0)\in \mathcal{A}$ so that for any $(\tilde{z}, \tilde{w})\in \cup_{k}^{\infty}\{F^{-k}(\epsilon, \epsilon)\}, k\geq0$,
	the Kobayashi distance $d_{\mathcal {A}}((z_0, w_0), (\tilde{z}, \tilde{w}))\geq C$. 
\end{thm}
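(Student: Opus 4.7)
The plan is to construct, for any prescribed $C>0$, a point $(z_0,w_0)\in \mathcal{A}$ whose Kobayashi distance to every backward iterate of $(\epsilon,\epsilon)$ exceeds $C$. I would exploit the fully split product structure of $F$, together with the resulting product structures on both the basin and the Kobayashi metric.

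First I would identify the basin: since $F^n(z,w)=(z^{m^n},w^{m^n})$, the orbit converges to $(0,0)$ if and only if $|z|<1$ and $|w|<1$, so $\mathcal{A}=\mathbb{D}\times\mathbb{D}$, which is connected and therefore coincides with the immediate basin. The analytic ingredient I would invoke is the standard product formula for the Kobayashi distance on the bidisk,
\[
d_{\mathbb{D}\times\mathbb{D}}\bigl((z_1,w_1),(z_2,w_2)\bigr)=\max\bigl(d_{\mathbb{D}}(z_1,z_2),\,d_{\mathbb{D}}(w_1,w_2)\bigr).
\]
Next I would describe the backward orbit: a point $(\tilde z,\tilde w)$ lies in $F^{-k}(\epsilon,\epsilon)$ exactly when $\tilde z^{m^k}=\tilde w^{m^k}=\epsilon$, which forces $|\tilde z|=|\tilde w|=\epsilon^{1/m^k}=:\rho_k$. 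Hence the entire backward orbit is trapped on the union of tori $\{|z|=|w|=\rho_k\}$, $k\ge 0$, with radii in $\{\epsilon^{1/m^k}\}\subset(0,1)$.

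The construction I would then carry out is to take $(z_0,w_0)=(r,0)$ with $r<1$ chosen close to $1$ depending on $C$. First I would fix $\delta\in(0,1)$ small enough that $\tfrac{1}{2}\log\tfrac{2-\delta}{\delta}\ge C$, and then $r$ close enough to $1$ that $\tfrac{1}{2}\log\tfrac{(1+r)\delta}{(1-r)(2-\delta)}\ge C$. For each $k$, one of two alternatives must occur: either $\rho_k\ge 1-\delta$, giving $d_{\mathbb{D}}(0,\tilde w)=\tfrac{1}{2}\log\tfrac{1+\rho_k}{1-\rho_k}\ge C$; or $\rho_k<1-\delta$, in which case the triangle inequality yields
\[
d_{\mathbb{D}}(r,\tilde z)\ge d_{\mathbb{D}}(r,0)-d_{\mathbb{D}}(0,\tilde z)\ge \tfrac{1}{2}\log\tfrac{(1+r)\delta}{(1-r)(2-\delta)}\ge C.
\]
Taking the maximum in the product formula then gives $d_{\mathcal{A}}\bigl((z_0,w_0),(\tilde z,\tilde w)\bigr)\ge C$ in both cases.

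The main obstacle I anticipate is conceptual rather than technical: one must notice that the backward orbit is rigidly confined to the equal-modulus locus $\{|z|=|w|\}$, whereas the product Kobayashi metric allows each coordinate to contribute a hyperbolic distance essentially independent of the other. Placing $z_0$ near $\partial\mathbb{D}$ while forcing $w_0=0$ is precisely what defeats the equal-modulus constraint: no torus of the form $\{|z|=|w|=\rho\}$ can be hyperbolically close to $(r,0)$ in both coordinates simultaneously, no matter which $\rho$ is chosen. Once this dichotomy is set up, what remains is just a quantitative calibration between $\delta$ and $1-r$, which is elementary.
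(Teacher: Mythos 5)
Your proposal is correct and follows essentially the same route as the paper's proof: identify $\mathcal{A}=\triangle\times\triangle$, use the max-formula for the Kobayashi distance on the bidisk, observe that every backward image lies on $\{|z|=|w|=\epsilon^{1/m^k}\}$, and pick a point with one coordinate hyperbolically deep and the other near the center, splitting into the cases of large and small $k$. Your version with $(r,0)$ and the explicit triangle-inequality calibration is in fact a slightly cleaner write-up of the same dichotomy the paper runs with $(1-\delta,\delta)$.
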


\begin{proof}
	
	We know $(\tilde{z}_k, \tilde{w}_k)=(\epsilon^{1/m^k}, \epsilon^{1/m^k} )$ for some positive integer $k.$ Then $|\tilde{z}|=|\tilde{w}|.$ We take $(z_0, w_0)=(1-\delta, \delta)$, $\delta$ is sufficiently small depending on $C$.  Then 
	\begin{equation*}
		\begin{aligned}
	d_{\triangle\times\triangle}\big((z_0, w_0), (\tilde{z}_k, \tilde{w}_k)\big)&=d_{\triangle\times \triangle}\big((z_0, w_0), (\epsilon^{1/m^k}, \epsilon^{1/m^k})\big)\\
	&=\max\bigg\{d_\triangle(z_0, \tilde{z}_k), d_\triangle(w_0, \tilde{w}_k)\bigg\}\\
	&=\max\bigg\{d_\triangle(0, \frac{z_0-\tilde{z}_k}{1-z_0\overline{\tilde{z}_k}}), d_\triangle(0, \frac{w_0-\tilde{w}_k}{1-w_0\overline{\tilde{w}_k}})\bigg\}\\
	&=\max\bigg\{\ln \frac{1+\lvert\frac{z_0-\tilde{z}_k}{1-z_0\overline{\tilde{z}_k}}\rvert}{1-\lvert\frac{z_0-\tilde{z}_k}{1-z_0\overline{\tilde{z}_k}}\rvert}, \ln \frac{1+\lvert\frac{w_0-\tilde{w}_k}{1-w_0\overline{\tilde{w}_k}}\rvert}{1-\lvert\frac{w_0-\tilde{w}_k}{1-w_0\overline{\tilde{w}_k}}\rvert}\bigg\}\\
	&=\max\bigg\{\ln \frac{1+\lvert\frac{1-\delta-\epsilon^{1/m^k}}{1-(1-\delta)\epsilon^{1/m^k}}\rvert}{1-\lvert\frac{1-\delta-\epsilon^{1/m^k}}{1-(1-\delta)\epsilon^{1/m^k}}\rvert}, \ln \frac{1+\lvert\frac{\delta-\epsilon^{1/m^k}}{1-\delta\epsilon^{1/m^k}}\rvert}{1-\lvert\frac{\delta-\epsilon^{1/m^k}}{1-\delta\epsilon^{1/m^k}}\rvert}\bigg\}.
		\end{aligned}
	\end{equation*}
Hence there are two cases in the following:

Case 1:	Notice that $d_{\triangle\times\triangle}\big((z_0, w_0), (\tilde{z}_k, \tilde{w}_k)\big)\geq\ln \frac{1+\lvert\frac{\delta-\epsilon^{1/m^k}}{1-\delta\epsilon^{1/m^k}}\rvert}{1-\lvert\frac{\delta-\epsilon^{1/m^k}}{1-\delta\epsilon^{1/m^k}}\rvert}\geq\ln\frac{1}{1-\lvert\frac{\delta-\epsilon^{1/m^k}}{1-\delta\epsilon^{1/m^k}}\rvert}\rightarrow\infty$, when $k\rightarrow\infty.$ Hence $d_{\triangle\times\triangle}\big((z_0, w_0), (\tilde{z}_k, \tilde{w}_k)\big)\geq C$ if $k>k_0$ for some integer $k_0$ and $\delta$ is smaller than $1/2$.

Case 2: Also, $d_{\triangle\times\triangle}\big((z_0, w_0), (\tilde{z}_k, \tilde{w}_k)\big)\geq\ln \frac{1+\lvert\frac{1-\delta-\epsilon^{1/m^k}}{1-(1-\delta)\epsilon^{1/m^k}}\rvert}{1-\lvert\frac{1-\delta-\epsilon^{1/m^k}}{1-(1-\delta)\epsilon^{1/m^k}}\rvert}\rightarrow\infty$ when $\delta\rightarrow0$ for each fixed $k.$ Hence $d_{\triangle\times\triangle}\big((z_0, w_0), (\tilde{z}_k, \tilde{w}_k)\big)\geq C$ if $k\leq k_0$ and $\delta$ small enough.
\end{proof}

\subsection{Dynamics of $F(z, w)=(P(z), Q(w)), P(0)=Q(0)=0, 0<|P'(0)|, |Q'(0)|<1$ }

\begin{thm}\label{the2}
Suppose $F(z, w)=(P(z), Q(w))$, where $P(z), Q(w)$ are two polynomials of degree $m_1, m_2\geq2$ on $\mathbb{C}$,
$P(0)=Q(0)=0,$ and $0<|P'(0)|, |Q'(0)|<1.$  Let $\Omega$ be the immediate attracting basin of $F(z, w)$. 
Then there is a constant $C$ such that for every point $(z_0, w_0)\in \Omega$, there exists a point $(\tilde{z}, \tilde{w})\in \cup_k F^{-k}(0, 0), k\geq0$ so that $d_\Omega\big((z_0, w_0), (\tilde{z}, \tilde{w})\big)\leq C,  d_\Omega$ is the Kobayashi distance on $\Omega$. 
\end{thm}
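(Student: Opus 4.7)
The plan is to reduce the two-variable statement to two separate applications of Theorem~\ref{thmA}, using the product structure of $F$. First I would identify the immediate basin $\Omega$ with a Cartesian product of one-variable immediate basins. Since $F^n(z,w)=(P^n(z),Q^n(w))$, the full basin of $(0,0)$ under $F$ is $\mathcal{A}(P,0)\times\mathcal{A}(Q,0)$, where $\mathcal{A}(P,0)$ and $\mathcal{A}(Q,0)$ are the one-variable basins. Let $\Omega_P$ and $\Omega_Q$ denote the immediate basins of $0$ under $P$ and $Q$ respectively. Then $\Omega_P\times\Omega_Q$ is open, connected, contains $(0,0)$, and sits inside the full basin, so it lies in $\Omega$; conversely, any point of $\Omega$ can be joined to $(0,0)$ by a path in $\Omega$, whose coordinate projections are paths from $0$ inside $\mathcal{A}(P,0)$ and $\mathcal{A}(Q,0)$, hence they stay in $\Omega_P$ and $\Omega_Q$ respectively. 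This gives $\Omega=\Omega_P\times\Omega_Q$.

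Next I would invoke the standard product formula for the Kobayashi distance,
\[
d_{\Omega_P\times\Omega_Q}\big((z_1,w_1),(z_2,w_2)\big)=\max\{d_{\Omega_P}(z_1,z_2),\ d_{\Omega_Q}(w_1,w_2)\},
\]
which reduces matters to controlling each coordinate separately by Theorem~\ref{thmA}. Before invoking Theorem~\ref{thmA} on $\Omega_P$ I must verify its hypothesis $P^{-1}(0)\cap\Omega_P\neq\{0\}$. For this I would argue that $P|_{\Omega_P}:\Omega_P\to\Omega_P$ is a proper holomorphic self-map whose degree $d_P$ is at least $2$, because the immediate basin of an attracting fixed point of a polynomial always contains a critical point (a classical Fatou--Julia fact). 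Since $P'(0)\neq 0$, the fixed point $0$ contributes multiplicity one to $P^{-1}(0)\cap\Omega_P$, so the remaining $d_P-1\geq 1$ preimages supply at least one point distinct from $0$; the identical argument applies to $Q$.

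Theorem~\ref{thmA} then yields constants $\tilde C_P,\tilde C_Q$ and, for any $(z_0,w_0)\in\Omega$, points $\tilde z\in P^{-k_1}(0)\cap\Omega_P$ and $\tilde w\in Q^{-k_2}(0)\cap\Omega_Q$ with $d_{\Omega_P}(z_0,\tilde z)\leq\tilde C_P$ and $d_{\Omega_Q}(w_0,\tilde w)\leq\tilde C_Q$. To place $(\tilde z,\tilde w)$ inside a single $F^{-k}(0,0)=P^{-k}(0)\times Q^{-k}(0)$, I would use that $P(0)=Q(0)=0$ yields the nested chains $P^{-j}(0)\subset P^{-(j+1)}(0)$ and $Q^{-j}(0)\subset Q^{-(j+1)}(0)$; choosing $k=\max(k_1,k_2)$ then places $(\tilde z,\tilde w)$ in $F^{-k}(0,0)$. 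Setting $C=\max(\tilde C_P,\tilde C_Q)$ and applying the product formula for $d_\Omega$ would conclude the proof.

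The main obstacle I anticipate is the verification of the two structural ingredients: the identification $\Omega=\Omega_P\times\Omega_Q$ and the degree-bound $d_P\geq 2$ which forces $P^{-1}(0)\cap\Omega_P\neq\{0\}$ (and likewise for $Q$), so that Theorem~\ref{thmA} is actually applicable to each factor. Once these are in place, the max-formula for the Kobayashi distance on a product makes the rest of the argument a direct two-fold invocation of the one-variable theorem.
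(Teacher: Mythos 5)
Your proposal is correct and takes essentially the same route as the paper's proof: identify $\Omega=\Omega_P\times\Omega_Q$, apply the one-variable Theorem \ref{thmA} to each factor, and combine the two bounds using the max formula for the Kobayashi distance on a product together with the nesting $P^{-j}(0)\subset P^{-(j+1)}(0)$, $Q^{-j}(0)\subset Q^{-(j+1)}(0)$ to land in a single $F^{-k}(0,0)$. Your explicit verification of the hypothesis $P^{-1}(0)\cap\Omega_P\neq\{0\}$ (via the classical critical point in the immediate basin, so $P|_{\Omega_P}$ is proper of degree at least $2$) is a detail the paper leaves implicit, but it does not change the structure of the argument.
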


\begin{proof}
	Let the immediate basin of attraction of $P(z), Q(w)$ be denoted by $\Omega_P, \Omega_Q$, respectively. Then $\Omega=\Omega_P\times\Omega_Q.$ 
	
	By the conclusion of Theorem 2.9 in \cite{RefH1}, we know that there is a constant $C_P(C_Q)$ such that for every point $z_0(w_0)\in \Omega_P(\Omega_Q)$, there exists a point $\tilde{z}(\tilde{w})\in \cup_k P^{-k}(0)(\cup_k Q^{-k'}(0)), k, k'\geq0$ so that $d_{\Omega_P}(z_0, \tilde{z})\leq C_P,  (d_{\Omega_Q}(w_0, \tilde{w})\leq C_Q),$ where $d_{\Omega_P}(d_{\Omega_Q})$ is the Kobayashi distance on $\Omega_P (\Omega_Q)$. Suppose $K=\max\{k, k'\}$ and $C=\max\{C_P, C_Q\}$, then $\tilde{z}\in\cup_k P^{-K}(0), \tilde{w}\in \cup_k Q^{-K}(0).$ Hence $(\tilde{z}, \tilde{w})\in \cup_K F^{-K}(0, 0)\subset\Omega. $ Therefore, $d_\Omega((z_0, w_0), (\tilde{z}, \tilde{w}))\leq C,  d_\Omega$ is the Kobayashi distance on $\Omega$. 
\end{proof} 
	
	\subsection{Dynamics of $F(z, w)=(P(z), Q(w)), P(0)= Q(0)=0$}
		\begin{itemize}
			\item[(1)] $P(z)=z^m, 0<|Q'(0)|<1, $ i.e., $|P'(0)|=0;$
			\item[(2)] $P(z)=z^m, Q'(0)=1, $  i.e., $|P'(0)|=0;$
			\item[(3)] $0<|P'(0)|<1, Q'(0)=1;$
			\item[(4)] $P'(0)=Q'(0)=1.$ 
		\end{itemize}
	
	\begin{thm}\label{the3}
	Suppose $F(z, w)=(P(z), Q(w))$, where $P(z), Q(w)$ are two polynomials of degree $m_1, m_2\geq2$ on $\mathbb{C}$, $P(0)=Q(0)=0$, and the module of the derivative of $|P'(0)|, |Q'(0)|$ is any one of the above four. Let $\Omega$ be the immediate attracting basin of $F(z, w)$. 
	 Then there exists a point $(z_0, w_0)\in \Omega$ so that for any $(\tilde{z}, \tilde{w})$ inside the preimage set under $\{F^{-k}\}$ of the fixed point $(0, 0)$ or a point very close to $(0, 0),$ 
	the Kobayashi distance $d_{\Omega}((z_0, w_0), (\tilde{z}, \tilde{w}))\geq C$. 
\end{thm}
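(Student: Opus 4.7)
The plan is to combine the product structure of $F$ with the one-dimensional failures of Theorem \ref{thmA}-type behavior (parabolic case from \cite{RefH2}, super-attracting case $z^m$ already present in the proof of Theorem \ref{the1}). First I would observe that the immediate basin decomposes as $\Omega = \Omega_P \times \Omega_Q$, where $\Omega_P, \Omega_Q$ are the respective immediate (attracting or parabolic) basins of $0$ under $P, Q$, and that the Kobayashi metric on a product is the maximum of the factor metrics, $d_\Omega\big((z, w), (z', w')\big) = \max\{d_{\Omega_P}(z, z'), d_{\Omega_Q}(w, w')\}$. Hence, to produce $(z_0, w_0)$ with Kobayashi distance at least $C$ from every preimage, it suffices to separate it, preimage by preimage, in at least one coordinate.

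Cases (3) and (4) reduce almost immediately: at least one coordinate (say $Q$) is parabolic, and \cite{RefH2} supplies $w_0^* \in \Omega_Q$ with $d_{\Omega_Q}(w_0^*, \xi) \geq C$ for every $\xi \in \cup_k Q^{-k}(\eta_2)$ whenever $\eta_2$ is close enough to $0$; pairing with any $z_0 \in \Omega_P$ (or, in case (4), a symmetric $z_0^*$ from a second application of \cite{RefH2}) finishes these cases by the product reduction. In cases (1) and (2), where $P(z) = z^m$ and $\Omega_P = \triangle$, the preimages of $(0,0)$ itself lie in $\{0\} \times \Omega_Q$, so taking $z_0 = 1 - \delta$ near $\partial\triangle$ immediately gives $d_\triangle(z_0, 0) = \ln\tfrac{2 - \delta}{\delta} > C$ for $\delta$ small. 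For preimages of a nearby target $(\eta_1, \eta_2)$ with $\eta_1 \neq 0$, I would mimic the diagonal argument of Theorem \ref{the1}: choose $(z_0, w_0) = (1 - \delta, w_0^*)$ so that for iterates $k$ with $|\eta_1|^{1/m^k}$ small every $m^k$-th root of $\eta_1$ lies near $0$ in $\triangle$ (the $z$-coordinate forces the separation as in Case 2 of the proof of Theorem \ref{the1}), while for the finite window of ``bad'' iterates $k$ for which $|\eta_1|^{1/m^k}$ is close to $1 - \delta$ (where a root might land Kobayashi-close to $z_0$) the choice of $w_0^*$ dodges the finite sets $Q^{-k}(\eta_2)$; in case (2) the parabolic failure of $Q$ from \cite{RefH2} already furnishes such a $w_0^*$, whereas in case (1) one must pick $w_0^*$ by hand.

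The main obstacle will be making this last step rigorous in case (1): one has to arrange that a single $w_0^*$ simultaneously evades the finitely many preimage sets $Q^{-k}(\eta_2)$ arising from the bad range of $k$, uniformly in the target $(\eta_1, \eta_2)$. This reduces to showing that a union of finitely many Kobayashi balls of radius $C$ cannot cover the bounded basin $\Omega_Q$ — standard hyperbolic geometry — together with careful bookkeeping of how many values of $k$ can actually be ``bad'' for a given pair $(\delta, \eta_1)$ (which should be controlled by the monotonicity of $k \mapsto |\eta_1|^{1/m^k}$). Once that is in place, everything else follows from the product-metric identity combined with the one-dimensional non-existence statements from \cite{RefH2} and Theorem \ref{the1}.
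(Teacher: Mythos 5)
Your proposal is correct, and for cases (2)--(4) it is essentially the paper's own argument: decompose $\Omega=\Omega_P\times\Omega_Q$, use that the product Kobayashi distance is the maximum of the factor distances, and invoke the parabolic failure from \cite{RefH2} in the $w$-coordinate. Where you genuinely differ is case (1). The paper there chooses $z_0=1-\delta$ and estimates only the first coordinate, asserting $d_\Omega\geq d_\triangle\big(1-\delta,\epsilon^{1/m^k}\big)\to\infty$ as $\delta\to0$; that single-coordinate bound is immediate when the target is $(0,0)$ itself (then every preimage has $z$-coordinate $0$), but for a nearby target with nonzero first coordinate $\eta_1$ it is not uniform in $k$: for each $\delta$ there is some $k$ with $|\eta_1|^{1/m^k}\approx 1-\delta$, and the corresponding positive real root sits at bounded hyperbolic distance from $z_0$. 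Your two-coordinate scheme closes exactly this point: outside a finite \emph{bad} window of $k$ (finite with cardinality controlled by $C$ and $m$, by monotonicity of $k\mapsto|\eta_1|^{1/m^k}$ and the triangle inequality through $d_\triangle(0,\cdot)$) the $z$-coordinate alone separates, and for the bad $k$ you pick $w_0^*$ outside the finitely many Kobayashi $C$-balls around the finite sets $Q^{-k}(\eta_2)\cap\Omega_Q$, which exists because closed Kobayashi balls in the hyperbolic plane domain $\Omega_Q$ are compact, so finitely many cannot cover it. This is the dodging argument of Theorem \ref{the1} transplanted to the mixed superattracting/attracting case; it costs extra bookkeeping but is what actually covers the ``point very close to $(0,0)$'' alternative in the statement, whereas the paper's shorter route suffices for the target $(0,0)$. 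One remark on quantifiers: as in Theorem \ref{the1}, the target point must be fixed before $(z_0,w_0)$ is chosen, since your bad window and the sets $Q^{-k}(\eta_2)$ depend on it; this is the reading both you and the paper use, so it is consistent, but it is worth stating explicitly when you write the argument up.
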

	


\begin{proof}
	If $|P'(0)|=0, 0<|Q'(0)|<1,$ by Theorem \ref{the1} and \ref{the2}, we know $\Omega=\triangle \times\Omega_Q.$ We choose $z_0=1-\delta$, $\delta$ small enough.
	Then we know that $\tilde{z}=\epsilon^{1/m^k}$ and 
	\begin{equation*}
	\begin{aligned}
d_\Omega((z_0, w_0), (\tilde{z}, \tilde{w}))
&=\max\big(d_\triangle(z_0, \tilde{z}), d_{\Omega_Q}(w_0, \tilde{w})\big)\\
&\geq d_\triangle(z_0, \tilde{z})\\
&=\ln \frac{1+\lvert\frac{1-\delta-\epsilon^{1/m^k}}{1-(1-\delta)\epsilon^{1/m^k}}\rvert}{1-\lvert\frac{1-\delta-\epsilon^{1/m^k}}{1-(1-\delta)\epsilon^{1/m^k}}\rvert}\\
&\rightarrow\infty
		\end{aligned}
	\end{equation*}
as $\delta\rightarrow0.$
Then the proof is done. 

If $P(z)=z^m, Q'(0)=1$ or $0<|P'(0)|<1, Q'(0)=1$, by the result in the paper \cite{RefH2}, we know this theorem is valid since $d_{\Omega_Q}(w_0, \tilde{w})$ is unbounded. Hence the same reason for $P'(0)=Q'(0)=1.$


	\end{proof}

		\section{ Dynamics of 	Polynomial skew products inside attracting basins of $F(z, w)= (P(z), Q(z,w))$}

	Polynomial skew products have been useful test cases for
complex dynamics in two dimensions. This allows one to use one variable results in one of the variables. 
However, for $F(z, w)= (P(z), Q(z,w))$, we can not calculate the Kobayashi distance $d_\Omega((z_0, w_0), (\tilde{z}, \tilde{w}))$ by analyzing $d_{\Omega_P}(z_0, \tilde{z}), d_{\Omega_Q}(w_0, \tilde{w})$ separately as for $F(z, w)=(P(z), Q(w))$ since $\Omega_Q$ also depends on the $z$-coordinate.
In this section, we consider the dynamics of $F(z, w)$ near $(0, 0)$ and study two simple cases. 
			\subsection{Dynamics of $F(z, w)=(z^2,  w^2+az)$}

\begin{thm}\label{thm5}
	Suppose $F(z, w)=(z^2, w^2+az)$, $a\neq0$. Let $\Omega$ be the immediate attracting basin of $(0, 0)$ and $0<\epsilon<<1$. We choose an arbitrary constant $C>0.$ 
Then there exists a point $(z_0, w_0)\in \Omega$ so that for any $(\tilde{z}, \tilde{w})\in \cup_{k}^{\infty}\{F^{-k}(\epsilon, 0)\}, k\geq0$,
the Kobayashi distance $d_{\Omega}((z_0, w_0), (\tilde{z}, \tilde{w}))\geq C$.
\end{thm}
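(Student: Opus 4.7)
The plan is to adapt the two-case strategy from the proof of Theorem~\ref{the1}, replacing the clean product decomposition (which fails here because the term $az$ couples the two coordinates) by a combination of a first-coordinate projection and a triangle-inequality argument exploiting boundary blow-up of the Kobayashi distance.

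I would begin by choosing the test point on the $F$-invariant slice $\{0\}\times\triangle$. Since $F(0,w)=(0,w^{2})$ restricts to the squaring map of the disk, every point $(0,w)$ with $|w|<1$ satisfies $(0,w^{2^{n}})\to(0,0)$, and hence $\{0\}\times\triangle\subset\Omega$. I then set $(z_{0},w_{0}):=(0,\,1-\delta)\in\Omega$ for a small $\delta>0$ to be fixed at the end of the argument. Because $\Omega\subset\triangle\times\mathbb{C}$ (any $(z,w)\in\Omega$ has $z^{2^{n}}\to0$, forcing $|z|<1$), the first-coordinate projection $\pi_{1}\colon\Omega\to\triangle$ is holomorphic, so by Kobayashi contraction
\[
d_{\Omega}\bigl((z_{0},w_{0}),(\tilde z,\tilde w)\bigr)\;\geq\;d_{\triangle}(0,\tilde z)\;=\;\operatorname{arctanh}|\tilde z|.
\]
For any $(\tilde z,\tilde w)\in F^{-k}(\epsilon,0)$ we have $|\tilde z|=\epsilon^{1/2^{k}}\to1$ as $k\to\infty$, so there exists an integer $k_{0}=k_{0}(\epsilon,C)$, independent of $\delta$, such that this bound already exceeds $C$ whenever $k\geq k_{0}$.

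For the remaining range $0\leq k<k_{0}$, the set $S:=\bigcup_{k<k_{0}}F^{-k}(\epsilon,0)$ is finite, so the constant $C_{0}:=\max_{(\tilde z,\tilde w)\in S}d_{\Omega}((\epsilon,0),(\tilde z,\tilde w))$ is finite. The triangle inequality then yields
\[
d_{\Omega}\bigl((z_{0},w_{0}),(\tilde z,\tilde w)\bigr)\;\geq\;d_{\Omega}\bigl((0,1-\delta),(\epsilon,0)\bigr)-C_{0},
\]
so the proof reduces to making $d_{\Omega}((0,1-\delta),(\epsilon,0))\geq C+C_{0}$ by choosing $\delta$ small.

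The main obstacle is this last boundary blow-up. The limit point $(0,1)$ is a fixed point of $F$ with Jacobian eigenvalues $0$ and $2$, hence a saddle in the Julia set, and in particular $(0,1)\in\partial\Omega$; so $(0,1-\delta)\to\partial\Omega$ as $\delta\to0$. The desired divergence $d_{\Omega}((0,1-\delta),(\epsilon,0))\to\infty$ then follows from complete Kobayashi-hyperbolicity (tautness) of $\Omega$. This tautness I would establish from two ingredients: boundedness of $\Omega$, via the escape estimate $|w_{n+1}|\geq|w_{n}|^{2}-|a|$ which forces $\Omega\subset\triangle\times\{|w|<R\}$ with $R=(1+\sqrt{1+4|a|})/2$, and pseudoconvexity of attracting Fatou components, a classical property for polynomial endomorphisms. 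Verifying this boundary property is the principal technical step; combined with the large-$k$ projection estimate, it yields $d_{\Omega}((z_{0},w_{0}),(\tilde z,\tilde w))\geq C$ for every $(\tilde z,\tilde w)\in\bigcup_{k\geq0}F^{-k}(\epsilon,0)$.
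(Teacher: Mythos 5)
Your first half runs parallel to the paper: choosing $z_0=0$, $w_0$ close to $1$, using the distance-decreasing projection to $\triangle$ to dispose of all but finitely many preimages, and reducing the problem to showing that $d_\Omega\bigl((0,1-\delta),\cdot\bigr)$ blows up as $\delta\to 0$ is exactly the paper's reduction (the triangle-inequality packaging via the finite set $S$ is a harmless variant). The gap is in the step you yourself flag as "the principal technical step" and then dispatch by citation: the claim that boundedness of $\Omega$ plus pseudoconvexity of attracting Fatou components yields complete Kobayashi hyperbolicity (or tautness), hence divergence of $d_\Omega\bigl((0,1-\delta),(\epsilon,0)\bigr)$. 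This is not a theorem you can invoke: bounded pseudoconvex domains need not be complete hyperbolic, tautness and completeness are different notions (and tautness alone does not force the distance to a fixed interior point to diverge along a sequence tending to a boundary point), and in $\mathbb{C}^2$ the Kobayashi distance to a boundary point genuinely can stay bounded when the boundary is thin near that point (think of boundary pieces of complex codimension two). So the statement "$(0,1)\in\partial\Omega$ therefore $d_\Omega\to\infty$" is precisely what must be proved, and nothing in your outline proves it.

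The paper's proof consists almost entirely of supplying that missing ingredient: it shows that near $(0,1)$ the boundary $\partial\Omega$ contains a holomorphic \emph{graph} $w=f_\infty(z)$ with $f_\infty(0)=1$, obtained as a limit of backward iterates of the graph $w\equiv 2/3$ (using the invariant bidisc $\{|z|<1,|w|<3/4\}\subset\Omega$ for small $a$, and for general $a$ a localized, multivalued version $\hat h(z,w)=\prod_i\bigl(w-g_i(z)\bigr)$ together with a localization lemma comparing $K_{\Omega_R}$ and $K_\Omega$). The function $h(z,w)=w-f_\infty(z)$ then maps $\Omega$ into a punctured disc $\triangle(0,R)\setminus\{0\}$ with $h(0,1-\delta)\to 0$, and the explicit Kobayashi distance of the punctured disc gives the divergence $|\ln(|\ln x|)|\to\infty$. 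To complete your argument you would have to either reproduce such a boundary-graph/omitted-value construction or give an actual proof that $\Omega$ is complete hyperbolic at the point $(0,1)$; as written, the key inequality is assumed rather than established.
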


\begin{proof}
We choose $z_0=0.$ Note that the projection map $\pi :\Omega\rightarrow \triangle, \pi(z, w)=z$ is distance decreasing in the Kobayashi metric. In addition, we know that the $z$-coordinate of any point in $F^{-k}(\epsilon, 0)$ approaches to $\partial\triangle$ as $k\rightarrow\infty.$ Hence there is an $l$ so that if $k>l,$ then $d_\Omega((0, w_0), F^{-k}(\epsilon, 0))\geq C$ for any $w_0.$ Let $(z_j, w_j), j=1, \cdots, N$ be the points in $\{F^{-k}(\epsilon, 0)\}_{k\leq l}.$ Next, we want to show that there is a $w_0$ so that $(0, w_0)\in\Omega$ and $d_\Omega\big((0, w_0), (z_j, w_j)\big)\geq C$ for any $j=1, \cdots, N.$ 

We first deal with a small $a$ and then with general $a$. When $a$ is small, we have the following lemma. 
\begin{lem}\label{lem1}
	Let $D:=\{(z,w); |z|<1, |w|<3/4\}$ be a bidisc. If $|a|<3/16$, then $D\subset\Omega.$
\end{lem}
\begin{proof}
	If $|w|<3/4$, then $|w^2|<9/16.$ Hence we have $|w^2+az|<9/16+|a|<3/4.$
\end{proof}

Now we continue to prove Theorem \ref{thm5}. Let $f(z)\equiv w_0=2/3$ for $|z|<1.$ Then $\gamma_0=(z, f(z))$ is a graph over $|z|<1$ inside $\Omega$.
Then $F^{-1}(\gamma_0)=F^{-1}(z, f(z))=(\sqrt{z}, \sqrt{f(z)- a \sqrt{z}}).$ Let us choose $\gamma_1=(\sqrt{z}, \sqrt{f(z)-a\sqrt{z}}):=(z, \sqrt{f(z^2)-az})=(z, f_1(z)).$ By induction, we can have $\gamma_2=(z, \sqrt{f_1(z^2)-az})=(z, f_2(z));\dots; \gamma_n=(z, f_n(z))$, here we always choose  $f_n(z)$ so that $f_n(0)>0$. Note that inductively $f_n(z)\geq 2/3,$ hence $f_n(z^2)-az$ never has any zeros, which means any branches of $f_n$ cannot meet at some points of $z\in D,$ i.e., none of any two graphs of $\gamma_n$ intersect each other in $D$. Then  $\lim_{n\rightarrow\infty}\gamma_n\subset\partial\Omega$ since $\gamma_0$ does not go through $(0,0)$ and $\gamma_0\subset\Omega$ so that the backward orbits of $\gamma_  0$ converge to $\partial\Omega.$ By Montel's theorem, there is a convergent subsequence $f_n(z)\rightarrow f_\infty(z)$, then $(z, f_\infty(z))\subseteq \partial\Omega$ and $f_\infty(0)=1.$ This implies that for any $z$, $\gamma_\infty(z)\in\partial\Omega_z$ for every slice $\Omega_z.$ 

Let $h(z, w)=w-f_\infty(z)$, then $h(z, w)$ is holomorphic on $\Omega.$ 
And we know that $h(0, w)=w-1$, then $\lim_{w\rightarrow1}h(0, w)=0.$ Hence $h(z, w)$ is vanishing at $(0, 1)$ and $h(\Omega) \subset \triangle(0, R)\setminus\{0\}$ for some constant radius $R>1$ since $\Omega$ is a bounded set. 

Let us recall the Kobayashi metric on the punctured disk, see example 2.8 in \cite{RefM}. The universal covering surface of $\triangle(0, R)\setminus\{0\}$ can be identified with the left half-plane $\{w=u+iv; u<0\}$ under the exponential map $ w\mapsto z=Re^w\in\triangle(0, R)\setminus\{0\}$ with $dw=\frac{dz}{z}$. Hence the Kobayashi metric $|\frac{dw}{u}|$ on the left half-plane corresponds to the metric $\bigg|\frac{dz}{r \ln \frac{r}{R}}\bigg|$ on the punctured disk $\triangle(0, R)\setminus\{0\}$, where $r=|z|$ and $u=\ln \frac{r}{R}.$

Then let $x:=h(0, w_0):=\lim_{w\rightarrow1}h(0, w)$ and $y:=h(z_j, w_j)$ for $j=1, \cdots, N.$
Then 
	\begin{equation*}
	\begin{aligned}
d_\Omega((0, w_0), (z_j, w_j))&\geq d_{\triangle(0, R)\setminus\{0\}}(x, y)\\
&=\bigg|\int_{x}^{y}\frac{1}{|z| \ln \frac{|z|}{R}}dz\bigg|\\
&=|\ln(|\ln y/R|)-\ln(|\ln x/R|)|\\
&\rightarrow\infty,
	\end{aligned}
\end{equation*}
as $x\rightarrow0.$  

For general $a$, although we cannot choose a cylinder as in Lemma \ref{lem1}, we can first choose $D_0:=\{(z,w); |z|<\eta<<1, |w|<3/4\}$. Let $\hat{f}(z)=\hat{w}_0=2/3$ for $|z|<\eta$, then $\hat{\gamma}_0=(z, \hat{f}(z))$ is a graph over $|z|<\eta.$ We have $F^{-1}(\hat{\gamma}_0)=F^{-1}(z, \hat{w}_0)=(\sqrt{z}, \sqrt{\hat{w}^2_0- a\sqrt{z}}).$ Then we choose $\hat{f}_1:=\sqrt{\hat{f}(z^2)-az}$ restricted to $|z|<\eta.$ Inductively, $\hat{f}_{n+1}(z)=\sqrt{\hat{f}_n(z^2)-az}$ is restricted to $|z|<\eta$ as well. Then one gets $\hat{f}_n(z)\rightarrow \hat{f}_\infty(z)$, then $(z, \hat{f}_\infty(z))\subseteq \partial\Omega$ and $\hat{f}_\infty(0)=1.$

Second, let $D_1:=\{(z,w); |z|<\sqrt{\eta}, w\in\mathbb{C}\}$. We choose $g_0(z)=w'_0=\hat{f}_\infty(z)$ where $|z|<\eta.$  And $\gamma'_0=(z, g_0(z))$ is a graph over $|z|<\eta.$ Then we have $F^{-1}(z, w'_0):=(z, g_1(z))$ for $|z|<\eta.$ However, there are two cases:

(1) If $g_0-az\neq0$ for $|z|<\sqrt{\eta}.$ Then there are two solutions of $g_1$, we denote them by $g_{1,1}:=\sqrt{w'^2_0- a\sqrt{z}};  g_{1,2}:=-\sqrt{w'^2_0- a\sqrt{z}}, |z|<\sqrt{\eta}$. We let $g_1$ be one of them and $D_2:=\{(z,w); |z|<\sqrt{\eta}, w\in\mathbb{C}\}.$ 

(2) If $g_0-az=0$ has one or more zeros on $|z|<\sqrt{\eta}.$ We let $g_1$ denote the multivalued function. We repeat this for $g_2(z)=\sqrt{g_1(z^2)-az}$, etc. We continue this process until we obtain a multivalued function $g_n(z)$ well defined on $|z|<\eta^{1/2^n}$, here $n$ will be determined below. Hence, $g_n(z)$ will have at most $2^n$ sheets. 
Meanwhile, we let
 $D_{n+2}:=\{(z,w); |z|<\eta^{1/2^n}, w\in\mathbb{C}\}.$   
 
 Then one gets $g_{n}(z)\rightarrow g_\infty(z)$, then $(z, \lim_{n\rightarrow\infty}g_{n}(z))\subseteq \partial\Omega$ and $\lim_{n\rightarrow\infty}g_{n}(0)=1.$
However, we cannot simply choose $\hat{h}(z, w)=w-g_\infty(z)$. The reason is there are $2^n$ sheets of $g_n(z)$ for every integer $n$, and it is possible that some of them meet at some $z\in D_{n+1}$, eg. $g_{n, 1}(z')=g_{n, 100}(z')=0$ for some $z'\in\{z; |z|<\eta^{1/2^n}\}$, i.e., $g_{n-1}(z^2)-az$ has zeroes in $D_{n}$. 
Hence we let $\hat{h}(z, w):=\prod_{i=1, \cdots, 2^n}\big(w-g_{i}(z)\big).$ Then $h$ is holomorphic on $D_{n+1}$ and $\lim _{w\rightarrow1}\hat{h}(0, w)=0.$ Thus, $\hat{h}$ vanishes at $(0, 1)$ and $\hat{h}(\Omega) \subset \triangle(0, R)\setminus\{0\}$ for some constant radius $R>1$ since $\Omega$ is a bounded set. In addition, we can choose $n$ big enough so that all finitely many $(z_j, w_j)$ are inside $D_{n+1}.$ 

Then let $x':=\hat{h}(0, w_0):=\lim_{w\rightarrow1}\hat{h}(0, w)$ and $y':=\hat{h}(z_j, w_j)$ for $j=1, \cdots, N.$
Then 
\begin{equation*}
	\begin{aligned}
		d_{D_{n+1}}((0, w_0), (z_j, w_j))&\geq d_{\triangle(0, R)\setminus\{0\}}(x', y')\\
		&=\bigg|\int_{x'}^{y'}\frac{1}{|z| \ln \frac{|z|}{R}}dz\bigg|\\
		&=|\ln(|\ln y'/R|)-\ln(|\ln x'/R|)|\\
		&\rightarrow\infty,
	\end{aligned}
\end{equation*}
as $x'\rightarrow0.$

In the end, we still need to show that for any two points $(0, w_0), (z_j, w_j)$ inside $D_{n+1}$, the Kobayashi distance $d_{D_{m}}((0, w_0), (z_j, w_j))$ is approximately equal to $d_\Omega((0, w_0), (z_j, w_j))$ as long as $D_{n+1}\subset\subset D_m$, and $D_m$ is very close to $\Omega.$ We prove the following localization result for the Kobayashi metric (see \cite{RefBGN}).

\begin{lem}
	For $0<s<1,$ let $\Omega_s=\{(z, w)\in\Omega; |z|<s\}.$ Fix $0<r<1,$ let $0<c<1$. Then there exists an $R$ $ (r<R<1)$ so that for every $p\in\Omega_r$ and $\xi$, we have 
 $$c K_\Omega(p, \xi)\geq K_{\Omega_R}(p, \xi)\geq K_\Omega(p, \xi).$$
\end{lem}
	
\begin{proof}
	By Definition in section 2, we know 
		$$F_{\Omega_r}(p, \xi):=\inf\{\lambda>0 : \exists f: \triangle\stackrel{hol}{\longrightarrow} \Omega_r, f(0)=p, \lambda f'(0)=\xi\}.$$
		We choose $r<R<1,$
		$$F_{\Omega_R}(p, \xi):=\inf\{\mu>0 : \exists g: \triangle\stackrel{hol}{\longrightarrow} \Omega_r, g(0)=p, \lambda g'(0)=\xi\}.$$
		Let $g :\triangle\rightarrow\triangle$ and $g(0)=p, g'(0)=\mu\xi.$ By Schwarz Lemma, we know that $|g(rz)|<|rz|<r$ for $|z|<1.$ Then we choose $f(z)=g(rz)\in\Omega_r$ with $f'(0)=cg'(0.)$ 
		Therefore, we have  
		$$c K_\Omega(p, \xi)\geq K_{\Omega_R}(p, \xi)\geq K_\Omega(p, \xi).$$
\end{proof}
Hence, $d_\Omega((0, w_0), (z_j, w_j)) \approx d_{D_{n+1}}((0, w_0), (z_j, w_j))\rightarrow\infty$ for general $a.$ Thus, there exists a point $(z_0, w_0)=(0, 1-\delta)\in \Omega,$ where $\delta\rightarrow0$, so that for any $(\tilde{z}, \tilde{w})\in \cup_{k}^{\infty}\{F^{-k}(\epsilon, 0)\}, k\geq0$,
the Kobayashi distance $d_{\Omega}((z_0, w_0), (\tilde{z}, \tilde{w}))\geq C$.

	\end{proof}

			\subsection{Dynamics of $F(z, w)=(z^2+az,  w^2+cw+bz),  0<|a|, |b|, |c|<<1$}
			
				\begin{thm}\label{thm2}
				Suppose $F(z, w)=(az+z^2, w^2+cw+bz), 0<|a|, |b|, |c|<<1,$ and $ |a|>>|c|, |a|>>|b|, |c|>>|ab|$. 
				Let $\Omega$ be the immediate attracting basin of $(0, 0)$. We choose an arbitrary constant $C>0.$ 
				Then there exists a point $(z_0, w_0)\in \Omega$ so that for any $(\tilde{z}, \tilde{w})\in \cup_{k}^{\infty}\{F^{-k}(0, 0)\}, k\geq0$,
				the Kobayashi distance $d_{\Omega}((z_0, w_0), (\tilde{z}, \tilde{w}))\geq C$.

			\end{thm}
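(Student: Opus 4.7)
The plan is to adapt the strategy from Theorem \ref{thm5}: build a holomorphic function $\hat h$ on a subdomain of $\Omega$ whose image lies in a once-punctured disk $\triangle(0,R)\setminus\{0\}$, vanishing at one specific point of $\partial\Omega$, and then exploit the divergence of the Kobayashi metric of the punctured disk at the puncture. The main difference from Theorem \ref{thm5} is that $P(z)=az+z^2$ satisfies $P'(0)=a\neq 0$, so the projection $\pi(z,w)=z$ no longer provides unbounded lower bounds on Kobayashi distance --- by Theorem A applied to $P$, the distance $d_{\Omega_P}(z_0,\bigcup_k P^{-k}(0))$ is bounded. Consequently, every preimage of $(0,0)$ must be separated from the chosen point $(z_0,w_0)$ by the function $\hat h$, without the help of a projection argument to dispose of deep preimages.

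First I construct a boundary curve as a backward graph. Fix $w^*\neq 0$ inside the immediate basin of $0$ for $Q(0,\cdot)=w^2+cw$. Since $|a|,|c|\ll 1$, the polydisk $D_0=\{|z|<r,\ |w-w^*|<\rho\}$ lies in $\Omega$ for suitable small $r,\rho$. Starting from $g_0\equiv w^*$, define inductively on $\{|z|<r\}$
\[
 g_{n+1}(z)^2+c\,g_{n+1}(z)+bz=g_n(P(z)),
\]
selecting the square-root branch that makes $\{g_n(0)\}$ the backward $Q(0,\cdot)$-orbit of $w^*$ converging to the repelling fixed point $1-c$. Because $|a|+r<1$, $P$ maps $\{|z|<r\}$ into itself, so the recursion is well defined. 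The family $\{g_n\}$ is uniformly bounded and hence normal by Montel, and any subsequential limit $g_\infty$ satisfies the functional equation $F(z,g_\infty(z))=(P(z),g_\infty(P(z)))$ with $g_\infty(0)=1-c$. Since $g_\infty$ stays bounded away from $0$, forward orbits along the graph do not converge to $(0,0)$, so $\{(z,g_\infty(z))\}\subset\partial\Omega$. As in the general-$a$ portion of Theorem \ref{thm5}, $g_\infty$ may have to be continued multi-sheetedly through branches of $P^{-1}$, producing a product $\hat h(z,w)=\prod_i(w-g_{\infty,i}(z))$ on a larger polydisk $D_N$ that contains all preimages of $(0,0)$ of depth $\le l$.

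With $\hat h$ in hand, the Kobayashi estimate follows the pattern of Theorem \ref{thm5}: $\hat h(\Omega\cap D_N)\subset\triangle(0,R)\setminus\{0\}$ because $\Omega$ is bounded and the curves $\{w=g_{\infty,i}(z)\}$ sit in $\partial\Omega$; choosing $(z_0,w_0)=(0,w_0)$ with $w_0\to 1-c$ drives $\hat h(z_0,w_0)\to 0$; the punctured-disk Kobayashi metric blows up at the puncture; and the localization lemma proven in the Theorem \ref{thm5} proof transfers the lower bound from $D_N$ back to $\Omega$.

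The hardest step will be verifying that $\hat h$ stays uniformly bounded away from $0$ at every preimage of $(0,0)$ in $D_N$, equivalently that no preimage of $(0,0)$ lies on any of the curves $\{w=g_{\infty,i}(z)\}$. This is exactly where the arithmetic hypotheses $|a|\gg|c|$, $|a|\gg|b|$, $|c|\gg|ab|$ enter: a direct computation of $F^{-1}(0,0)$ gives $z'\in\{0,-a\}$, with $w'\in\{0,-c\}$ in the first case and $w'=\frac{-c\pm\sqrt{c^2+4ab}}{2}$ in the second. All of these preimages have $|w'|=O(|c|+\sqrt{|ab|})$, hence sit far from $w\approx 1-c$, and the hypotheses propagate this separation through deeper preimages via the functional equation for $g_\infty$. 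For the infinitely many preimages that fall outside $D_N$, I expect to use the distance-decreasing property $d_\Omega(F^m(p),F^m(q))\le d_\Omega(p,q)$: iterate $F$ forward until the deep preimage lands at a shallow preimage contained in $D_N$, while tracking $F^m(z_0,w_0)$, which remains on $\{z=0\}$ and is pushed off $1-c$ by the repelling expansion of $Q(0,\cdot)$ before eventually converging to $(0,0)$.
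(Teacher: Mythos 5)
The key difference between your proposal and the paper's proof is the choice of test point, and this is where your approach runs into trouble. You put the test point at $(0,w_0)$ with $w_0\to 1-c$, i.e.\ at the ``centre'' of $\Omega_P$ in the $z$-direction and close to $\partial\Omega$ in the $w$-direction. The paper does the opposite: it takes $(z_N,0)$ with $z_N\in P^{-N}(-a)$ a deep preimage of $0$ under $P$, so the test point is near $\partial\Omega_P$ in the $z$-direction but stays at Euclidean distance $\geq 2/3$ from the slice boundary in the $w$-direction (Lemma on $\Omega_{2/3}$). Crucially the paper's Lemma showing $(0,0)\notin F^n(\Omega_{z_N,2/3})$ guarantees that the test point is never itself a backward image of $(0,0)$, and this is exactly where the arithmetic conditions $|a|\gg|c|$, $|a|\gg|b|$, $|c|\gg|ab|$ are used. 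Shallow preimages are then killed by the $\Omega_P$-projection because their $z$-coordinate is far from $z_N$ in $d_{\Omega_P}$; deep preimages are killed by the boundary lamination and the punctured-disk function $H(z,w)=w-f_\alpha(z)$ because their $w$-coordinate collapses onto $\partial\Omega_{\tilde z}$ while $w=0$ does not.

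Your choice $(0,w_0)$ creates a genuine gap. Since $P(0)=0$, the fibre $\{z=0\}$ contains preimages of $(0,0)$ of \emph{every} depth: $(0,w'_k)$ with $w'_k\in (w\mapsto w^2+cw)^{-k}(0)$. The backward orbit of $0$ under $w\mapsto w^2+cw$ is dense in the Julia set of that map, which contains the repelling fixed point $1-c$. Hence for every $\delta>0$ there are preimages $(0,w'_k)$ with $|w'_k-(1-c)|<\delta$. Your function $\hat h$ vanishes at $(0,1-c)$, so $\hat h(0,w'_k)$ tends to $0$ just as $\hat h(0,w_0)$ does. The punctured-disk estimate $d_{\triangle^*}(x,y)=\big|\ln|\ln(y/R)|-\ln|\ln(x/R)|\big|$ then gives \emph{no} lower bound when $x$ and $y$ go to $0$ at a comparable rate, so these particular preimages escape the argument. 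The ``hardest step'' you flag is therefore not a matter of showing the $g_{\infty,i}$-curves avoid the preimages (that is automatic since they lie on $\partial\Omega$), but of controlling the \emph{rate} at which preimages approach the curve compared to the rate at which your chosen $w_0$ does, and nothing in the proposal addresses this.

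The fall-back via the contraction $d_\Omega(F^m p,F^m q)\leq d_\Omega(p,q)$ also does not close the gap as written. To bring a depth-$k$ preimage $q=(0,w'_k)$ to a shallow preimage you must take $m\approx k$, but the forward orbit of $(0,w_0)$ leaves the neighbourhood of $1-c$ after roughly $\log(1/\delta)$ steps and thereafter converges to $(0,0)$; once $k\gg\log(1/\delta)$ both $F^m(p)$ and $F^m(q)$ are close to $(0,0)$ and there is no reason for $d_\Omega(F^m p,F^m q)$ to be bounded below. You would have to synchronize $m$ with both $k$ and $\delta$, and the balancing does not obviously work out for arbitrarily deep $q$. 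The paper sidesteps all of this: with $w=0$ the test point sits in the $F$-invariant bidisk and never collapses onto a boundary graph, so only the deep preimages need the punctured-disk argument, and there $H(\tilde z,\tilde w)\to 0$ while $H(z_N,0)$ is uniformly bounded away from $0$.
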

			
				Note that if we first fix $a,c$ and let $b$ be chosen smaller and smaller, then Theorem \ref{the2} always fails, but in the limit case when $b=0$, Theorem \ref{the2} is valid. This shows that the situation is very unstable.
			
			\begin{proof}
				To prove this theorem, we first prove the following lemma:
				\begin{lem}\label{lem1}
					Let $\Omega_{2/3}=\{(z, w), z\in\Omega_P, |w|<2/3\},$ then $F(\Omega_{2/3})\subseteq\Omega_{2/3}.$ Moreover, $\Omega_{2/3}\subseteq\Omega.$

				\end{lem}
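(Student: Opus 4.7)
The plan is to verify the two inclusions by direct estimate, taking advantage of the fact that the $z$-coordinate of $F$ evolves autonomously.

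For $F(\Omega_{2/3})\subseteq\Omega_{2/3}$, I would write $F(z,w)=(P(z),Q(z,w))$ with $P(z)=az+z^2$ and $Q(z,w)=w^2+cw+bz$. The first coordinate automatically lies in $\Omega_P$ by forward invariance of the one-dimensional immediate basin. For the second coordinate I would bound
\[
|Q(z,w)|\leq |w|^2+|c||w|+|b||z|<\tfrac{4}{9}+\tfrac{2}{3}|c|+|b|M,
\]
where $M$ is a uniform bound for $|z|$ on $\Omega_P$. Since $\Omega_P$ is contained in the filled Julia set of $P$ and $|P(z)|>|z|$ whenever $|z|>1+|a|$, one may take $M\leq 1+|a|$. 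The desired estimate $|Q(z,w)|<2/3$ thus reduces to $|b|M+\tfrac{2}{3}|c|<\tfrac{2}{9}$, which holds immediately from the hypothesis $|a|,|b|,|c|\ll 1$.

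For $\Omega_{2/3}\subseteq\Omega$, the previous step gives $F^n(\Omega_{2/3})\subseteq\Omega_{2/3}$ for every $n\geq 0$. Fix $(z,w)\in\Omega_{2/3}$ and write $F^n(z,w)=(z_n,w_n)$. By choice of $\Omega_P$, $z_n=P^n(z)\to 0$. The recursion
\[
|w_n|\leq |w_{n-1}|^2+|c||w_{n-1}|+|b||z_{n-1}|
\]
combined with $z_n\to 0$ gives, on passing to $L:=\limsup_n|w_n|$, the inequality $L\leq L^2+|c|L$. Since $0\leq L\leq 2/3<1-|c|$, this forces $L=0$, so $w_n\to 0$ and the full orbit converges to $(0,0)$. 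Because $\Omega_{2/3}=\Omega_P\times\{|w|<2/3\}$ is connected and contains $(0,0)$, it lies inside the immediate attracting basin $\Omega$.

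The main (mild) obstacle is checking that the numerical bounds survive with enough slack that the $w$-direction is genuinely contracting on $\Omega_{2/3}$, so that the $\limsup$ argument closes. Concretely this amounts to the inequality $|b|(1+|a|)+\tfrac{2}{3}|c|<\tfrac{2}{9}$, which is built into the smallness hypotheses of Theorem \ref{thm2}; everything else is a routine forward-invariance-and-convergence argument that does not require the finer Kobayashi machinery developed later.
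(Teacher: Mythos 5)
Your proof is correct and follows essentially the same route as the paper: a triangle-inequality estimate on $Q(z,w)=w^2+cw+bz$, using a uniform bound on $|z|$ over $\Omega_P$, gives forward invariance of $\Omega_{2/3}$, and then the recursion for $|w_n|$ together with $z_n\to 0$ yields convergence of orbits to $(0,0)$. The only cosmetic differences are your escape-radius bound $|z|\leq 1+|a|$ in place of the paper's bound $\Omega_P\subseteq\{|z|<2\}$, your $\limsup$ argument in place of the paper's linear contraction estimate $|w_{n+1}|\leq(2/3+|c|)|w_n|+|b||z_n|$, and your explicit (welcome) remark that connectedness of $\Omega_{2/3}$ and $(0,0)\in\Omega_{2/3}$ place it in the \emph{immediate} basin.
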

				
				\begin{proof}
				 We know $\Omega_P\subseteq\{z, |z|<2\}.$ If $|w|<2/3,$ we obtain
				$|w^2+cw+bz|\leq|w^2|+|c||w|+|b||z|<4/9+2c/3+|b||z|<4/9+|c|+2|b|<2/3$ for $0<|c|, |b|<<1.$ Thus, $F(\Omega_{2/3})\subseteq\Omega_{2/3}.$ 
				
				Let $(z_1, w_1)\in\Omega_{2/3}$ and $(z_n, w_n)$ be the orbit of $(z_1, w_1).$ We know $z_n\rightarrow0,$ and $|w_n|<2/3$ since $F(\Omega_{2/3})\subseteq\Omega_{2/3}.$ Then  $|w_{n+1}|\leq|w_n|^2+|c||w_n|+|b||z_n|\leq(2/3+|c|)|w_n|+|b||z_n|.$ Hence $w_n\rightarrow0.$ Therefore, $\Omega_{2/3}\subseteq\Omega.$ 
				\end{proof}

				\begin{lem}
					Let $\Omega_z$ be the slice of $\Omega$ at $z$ and $\Omega_{z, 2/3}=\{(z, w)\in\Omega_z, |w|<2/3\}.$ Then, each $\Omega_z$ is connected and simply connected.
				\end{lem}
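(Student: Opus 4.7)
The plan is to realize $\Omega_z$ as an increasing union of topological disks, using the skew-product structure. For each $n \geq 0$, let $\phi_{z,n}(w)$ denote the second coordinate of $F^n(z,w)$; since the fibre maps are $Q_{z_k}(w) = w^2 + cw + bz_k$, the function $\phi_{z,n}$ is a polynomial of degree $2^n$ in $w$, whose coefficients depend on the forward orbit $z = z_0, z_1 = P(z), \dots, z_{n-1}$ of $z$ under $P$. Set $U_n := \phi_{z,n}^{-1}\bigl(\{|w| < 2/3\}\bigr)$. The first step is to check that $\Omega_z = \bigcup_{n \geq 0} U_n$: if $w \in \Omega_z$ then $F^n(z,w) \to (0,0)$, so eventually $|\phi_{z,n}(w)| < 2/3$; conversely, if $|\phi_{z,n}(w)| < 2/3$ then $(z_n, \phi_{z,n}(w)) \in \Omega_{2/3}$, and by the previous lemma its forward orbit stays in $\Omega_{2/3}$ and converges to $(0,0)$. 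The inclusion $F(\Omega_{2/3}) \subseteq \Omega_{2/3}$ also yields $U_n \subseteq U_{n+1}$.

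The core step is to show that each $U_n$ is a topological disk. I invoke the following standard fact: if $p \in \mathbb{C}[w]$ has degree $d$ and every finite critical value of $p$ lies in a simply connected domain $D \subset \mathbb{C}$, then $p^{-1}(D)$ is connected and simply connected. To see this, extend $p$ to the Riemann sphere; the only critical value of the extension in $\hat{\mathbb{C}} \setminus D$ is $\infty$, with local degree $d$ there, so the monodromy of $p$ around $\partial D$ is a single $d$-cycle. Hence $p^{-1}(\hat{\mathbb{C}} \setminus D)$ is a single topological disk neighbourhood of $\infty$, and its complement $p^{-1}(D)$ is also a topological disk.

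It then suffices to check that every critical value of $\phi_{z,n}$ lies in $\{|w| < 2/3\}$. By the chain rule, $\phi_{z,n}'(w) = \prod_{k=0}^{n-1} Q_{z_k}'(w_k) = \prod_{k=0}^{n-1}(2 w_k + c)$, where $w_k$ is the second coordinate of $F^k(z,w)$, so $w$ is a critical point of $\phi_{z,n}$ precisely when $w_k = -c/2$ for some $k \leq n-1$. For such a critical point, $w_{k+1} = Q_{z_k}(-c/2) = -c^2/4 + b z_k$; since $z \in \Omega_P \subseteq \{|z| < 2\}$ gives $|z_k| \leq 2$, we obtain $|w_{k+1}| \leq c^2/4 + 2|b| < 2/3$ under the smallness hypotheses on $b,c$. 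Thus $(z_{k+1}, w_{k+1}) \in \Omega_{2/3}$, and forward invariance of $\Omega_{2/3}$ implies the critical value $\phi_{z,n}(w) = w_n$ still satisfies $|w_n| < 2/3$.

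Combining, each $U_n$ is a topological disk and $U_n \subseteq U_{n+1}$, so the increasing union $\Omega_z = \bigcup_n U_n$ is connected and simply connected: any loop in $\Omega_z$ is compact, hence lies in some $U_n$, where it bounds a disk. The main obstacle is the critical value control in the third paragraph; all other ingredients reduce to the previous lemma together with the standard polynomial fact recalled above.
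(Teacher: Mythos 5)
Your proof is correct in substance but takes a genuinely different route from the paper's. The paper argues by contradiction: it assumes the slice $\Omega_z$ has a second component disjoint from the one containing $(z,0)$, uses that each fibre map $w\mapsto w^2+cw+bz$ is a degree-two branched cover whose critical point $-c/2$ lies near $0$ to claim that both preimages of every point of $\Omega_{P(z),2/3}$ are already captured by the component of $(z,0)$, concludes $F^n(\Omega_z^2)\cap\Omega_{P^n(z),2/3}=\emptyset$ for all $n$, contradicting attraction to $(0,0)$; simple connectivity is then dispatched with a one-line appeal to the maximum principle. You instead build $\Omega_z$ from the inside as the increasing union of $U_n=\phi_{z,n}^{-1}(\{|w|<2/3\})$ and show each $U_n$ is a topological disk because every finite critical value of the fibre composition $\phi_{z,n}$ lies in $\{|w|<2/3\}$: a critical point forces $w_k=-c/2$ for some $k$, whence $w_{k+1}=-c^2/4+bz_k$ has modulus at most $|c|^2/4+2|b|<2/3$ (using $\Omega_P\subset\{|z|<2\}$), and forward invariance of $\Omega_{2/3}$ keeps the orbit there, so the standard fact about polynomial preimages of a disk containing all critical values applies. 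This buys you two things: the paper's unproved assertion that both preimages of a point of $\Omega_{P(z),2/3}$ lie in the component of $(z,0)$ is exactly your critical-value fact in the case $n=1$, so your argument fills in the sketchiest step of the paper's proof; and simple connectivity comes out honestly (an increasing union of disks) rather than by fiat. One small point to tighten: in the reverse inclusion $\bigcup_n U_n\subseteq\Omega_z$, convergence of the orbit of $(z,w)$ to $(0,0)$ only places $(z,w)$ in the basin of attraction, not yet in the immediate basin $\Omega$; to finish, note that $\{z\}\times U_n$ is connected (which you prove) and contains $(z,0)$, which lies in $\Omega$ because $0\in U_n$ by forward invariance and $\Omega_{2/3}\subseteq\Omega$, so the whole connected set lies in the component $\Omega$. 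The same elision occurs in the paper's own invariance lemma, so this is a refinement of, not an obstruction to, your strategy.
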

				
				\begin{proof}
				
				Since $Q(w)=w^2+cw+bz$ is a two-to-one function, every point $w\in\Omega_{P(z)}$ has two preimages inside $\Omega_z$ counting multiplicity. Hence $F: \Omega_z\rightarrow\Omega_{P(z)}$ is a double covering and it has a critical point $w=-\frac{c}{2}$ in the $w$-coordinate.
			
				Suppose there exists at least two disjoint connected components $\Omega^1_z, \Omega^2_z$ inside $\Omega_z$, and  $(z, 0)\in\Omega^1_z.$ Then $\Omega_{z, 2/3}\subseteq\Omega^1_z.$
				By Lemma \ref{lem1}, we know $F( \Omega_{z, 2/3})\subseteq\Omega_{P(z), 2/3}.$ In addition, $F$ sends $\Omega_z$ to $\Omega_{P(z)},$ then we know
				$F(\Omega^1_z)\subseteq\Omega_{P(z)}.$ 
			
			Furthermore, we know that every point inside $\Omega_{P(z), 2/3}$ has two preimages inside $\Omega^1_z$ since $F$ is a double covering, and it has a critical point $w=-\frac{c}{2}$, which is very close to $0$ in the $w$-coordinate. Hence $F(\Omega_z^2)\cap \Omega_{P(z), 2/3}=\emptyset.$ Inductively, we know $ F^n(\Omega_z^2)\cap\Omega_{P^n(z), 2/3}=\emptyset.$ Thus, $F^n(\Omega^2_z)$ cannot converge to $0.$ Therefore, $\Omega^2_z$ is not inside $\Omega_z.$ 
			
			By the maximum principle, we know that $\Omega_z$ is simply connected.


				\end{proof}
			
			
				\begin{lem}
			Let $z_N$ be a preimage of $-a\in P^{-1}(0),$ i.e., $z_N\in P^{-N}(-a)$, where $N$ is large enough. Then we have 
			$(0, 0)\notin F^n(\Omega_{z_N, 2/3})$ for any integer $n\geq1.$
			\end{lem}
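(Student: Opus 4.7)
The plan is to split by cases on $n$. For $n \leq N$, the $z$-coordinate of every point of $F^n(\Omega_{z_N, 2/3})$ equals $P^n(z_N)$. Since $0$ is a fixed point of $P$ and $P^N(z_N) = -a \neq 0$, no iterate $P^k(z_N)$ with $k \leq N$ can be $0$; hence $(0,0) \notin F^n(\Omega_{z_N, 2/3})$ trivially.

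For $n \geq N+1$, we have $P^n(z_N) = 0$. Let $v_n(w)$ denote the $w$-coordinate of $F^n(z_N, w)$, a polynomial of degree $2^n$ in $w$. For $n > N+1$, writing $v_n = Q_0^{n-N-1} \circ v_{N+1}$ with $Q_0(v) = v^2 + cv$, the equation $v_n(w_0) = 0$ becomes $v_{N+1}(w_0) \in Q_0^{-(n-N-1)}(0)$, so the whole statement reduces to showing that for every $|w_0| < 2/3$ and every $q$ in the countable eventual-zero set $\bigcup_{m \geq 0} Q_0^{-m}(0)$, one has $v_{N+1}(w_0) \neq q$.

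My approach is backward iteration through the recursion $v_{k-1}^2 + c v_{k-1} + b P^{k-1}(z_N) = v_k$. For a fixed $q$, start from $v_{N+1} = q$ and solve by the quadratic formula at each step, obtaining two branches $v_{k-1} = (-c \pm \sqrt{c^2 - 4(b P^{k-1}(z_N) - v_k)})/2$; the $2^{N+1}$ sign sequences produce $2^{N+1}$ candidate preimages $w_0 = v_0$. The essential observation is that each backward step \emph{amplifies} the $w$-magnitude: when $|v_k - b P^{k-1}(z_N)| \ll |c|^2$, the near-$0$ branch gives $|v_{k-1}| \approx |v_k - b P^{k-1}(z_N)|/|c|$, a factor-of-$1/|c|$ amplification; in the opposite regime, $|v_{k-1}| \approx \sqrt{|v_k - b P^{k-1}(z_N)|}$, a square-root amplification driving magnitudes toward $1$. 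Starting from any $|q| \leq 2/3$ and iterating backward $N+1$ times produces $|v_0|$ bounded below by an explicit $\delta_N$ with $\delta_N \to 1$ as $N \to \infty$, so $\delta_N > 2/3$ for $N$ large.

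The main obstacle I expect is uniformity: the amplification lower bound $\delta_N$ must hold simultaneously for every $q \in \bigcup_{m \geq 0} Q_0^{-m}(0) \cap \{|v| < 2/3\}$ and every sign sequence, in particular for $q$ close to $-ab$, where the initial backward step $v_N$ degenerates to small values and the naive amplification argument loses some of its bite. Making the bound uniform will require tracking $|v_k|$ carefully through the transition between the two magnitude regimes, and using the hypotheses $|a| \gg |c|$, $|a| \gg |b|$, $|c| \gg |ab|$ to control the coefficients $b P^{k-1}(z_N)$ relative to the other scales that appear in the discriminant at each step.
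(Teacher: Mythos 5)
Your case analysis and the reduction are fine: for $n\le N$ the $z$-coordinate argument is correct, and for $n\ge N+1$ it is indeed equivalent to show $v_{N+1}(w_0)\notin\bigcup_{m\ge0}Q_0^{-m}(0)$ for all $|w_0|<2/3$ (modulo the unstated fact that you may restrict to $|q|<2/3$, which needs the forward invariance $|w|<2/3\Rightarrow|w^2+cw+bz|<2/3$ for $|z|<2$). The genuine gap is the decisive quantitative step: the uniform lower bound $|v_0|\ge\delta_N>2/3$ over all $2^{N+1}$ sign sequences and all admissible $q$ is asserted, not proved, and the mechanism you propose for it --- that each backward step amplifies the $w$-magnitude --- is not true. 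By your own formula, when $|bP^{k-1}(z_N)-v_k|\ll|c|^2$ the near-zero branch gives $|v_{k-1}|\approx|bP^{k-1}(z_N)-v_k|/|c|$, which need not exceed $|v_k|$ and can be far smaller whenever a branch value nearly cancels $bP^{k-1}(z_N)$; already the first backward step from $q$ near $-ab$ (e.g.\ $q=0$ when $|ab|\ll|c|^2$, which the hypotheses allow) produces values of size about $|ab|/|c|$ or $|c|$, nowhere near amplified. Whether subsequent steps amplify or contract depends on delicate comparisons among $|a|,|b|,|c|$ and the branch values, and nothing in the sketch excludes a sign sequence that keeps selecting the small root and lands in $\{|w|<2/3\}$ --- but excluding exactly that is the content of the lemma, so the plan assumes what it must prove. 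Even granting amplification where it does occur, the uniformity over the eventual-zero set (which you would also need to cut down to a finite set inside $\{|v|<2/3\}$ via an accumulation-on-the-Julia-set argument) and over all branches is left entirely open; you flag this obstacle yourself but give no way past it.

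The paper's proof avoids all branch bookkeeping by running the estimate forward, which is where the hypotheses $|a|\gg|b|$, $|a|\gg|c|$, $|c|\gg|ab|$ actually enter. For $|w|<2/3$ the fiber maps contract: $|w^2+cw+bz|<\tfrac78|w|$ as long as $4|b|<|w|<1/4$ and $|z|<2$, so by the time the $z$-orbit reaches $-a$ one has $|w|\le4|b|$; at the transition $-a\mapsto0$ the drift term dominates, giving $\tfrac12|ab|\le|w_0|\le2|ab|\ll|c|$, in particular $w_0\ne0$; and on the invariant line $z=0$ the map $w\mapsto w(w+c)$ never sends a nonzero point of modulus $\ll|c|$ to $0$, since such a point cannot equal $-c$ and its image again has modulus $\ll|c|$. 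One forward pass therefore handles every $n$ at once. In your backward formulation you would in effect have to import this same forward estimate (it says precisely that $v_{N+1}(w_0)$ is nonzero of modulus $\ll|c|$, hence off the eventual-zero set, whose nonzero elements have modulus at least on the order of $|c|$), at which point the backward amplification machinery becomes unnecessary.
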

			
			\begin{proof}

				Let us take a point $(z_N, w_N)\in\Omega_{z_N, 2/3}$ where $N$ is sufficiently large, then $F(z_N, w_N)=(z_{N-1}, w_{N-1})\in\Omega_{z_{N-1}, 1/2}, F^2(z_N, w_N)=(z_{N-2}, w_{N-2})\in\Omega_{z_{N-2}, 1/3}, F^3(z_N, w_N)=(z_{N-3}, w_{N-3})\in\Omega_{z_{N-3}, 1/4}.$ If $4|b|<|w|<1/4,$ we know that
				$$ |w^2+cw+2b|<|w|(|w|+|c|+1/2)<7/8|w|.$$
				This shows $w$ will shrink to $0$ very fast. Thus, for some uniformly large $L>>4$, we will have $|w_{N-l}|<4|b|$ for all $L\leq l \leq N.$ Then inductively, $F^{N-1}(z_N, w_N )=(-a, w_1)\in\Omega_{z_1, 4b}, i.e., |w_1|\leq 4|b|.$ And $F^N(z_N, w_N)=(0, w_0)=(0, w_1^2+cw_1+bz_1).$
				Then $$0<\frac{1}{2}|ab|\leq|ab|-16|b|^2-4|cb| \leq|w_0|=|w_1^2+cw_1+bz_1|<16|b|^2+4|cb|+|ab|\leq2|ab|<<|c|$$ 
				since $|a|>>|c|, |a|>>|b|, |c|>>|ab|.$

		 Therefore, $(0, 0)\notin F^n(z_N, \Omega_{2/3})$ for any $n\leq N.$ However, for $n>N,$ we use that $F$ restricted to the $w$-axis is just $w\rightarrow w^2+cw\approx cw$, so $w_n$ goes to $0$ but never lands on $0$.

				\end{proof}

			\begin{lem}\label{lem2}
				Let $\Omega_n=F^{-n}(\Omega_{2/3}).$ Then for $(\tilde{ z}_n, \tilde{w}_n)\in\Omega_n,$ the Euclidean distance $d_E(\tilde{w}_n, \partial\Omega_{\tilde{z}_n})\rightarrow 0$ in $\Omega_{\tilde{z}_n}$ when $n\rightarrow\infty.$
			\end{lem}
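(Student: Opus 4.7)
The plan is to argue by contradiction via a normal-families / compactness argument. First, because every orbit in $\Omega$ converges to $(0,0)\in\Omega_{2/3}$, one has $\bigcup_n\Omega_n = \Omega$; combined with $F(\Omega_{2/3})\subseteq\Omega_{2/3}$, the sets $\Omega_n$ are increasing. In this set-up the conclusion of the lemma is naturally read as the assertion that whenever $p_n = (\tilde z_n,\tilde w_n)\in\Omega_n\setminus\Omega_{n-1}$ (i.e., the orbit of $p_n$ first enters $\Omega_{2/3}$ at step $n$), one has $d_E(\tilde w_n,\partial\Omega_{\tilde z_n})\to 0$, and this is what I will prove.

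Suppose the assertion fails, so that $d_E(\tilde w_n,\partial\Omega_{\tilde z_n})\geq\delta>0$ along some subsequence. Then the vertical disc $\{\tilde z_n\}\times\{|w-\tilde w_n|<\delta\}$ lies inside $\Omega$, giving holomorphic embeddings $\phi_n\colon\triangle\to\Omega$ defined by $\phi_n(\zeta) = (\tilde z_n,\,\tilde w_n+\tfrac{\delta}{2}\zeta)$. Since $\Omega$ is bounded (it is a Fatou component of the polynomial map $F$ with attracting fixed point $(0,0)$), the family $\{\phi_n\}$ is normal, and by Montel's theorem a subsequence converges locally uniformly to a holomorphic $\phi_\infty\colon\triangle\to\overline{\Omega}$. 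The key step is then to upgrade $\phi_\infty(\triangle)\subseteq\overline{\Omega}$ to $\phi_\infty(\triangle)\subseteq\Omega$, which rests on the fact that $\partial\Omega$ is contained in the Julia set of the skew product $F$ and hence contains no non-constant holomorphic disc.

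Granting $\phi_\infty\colon\triangle\to\Omega$, I invoke that $F^k\to(0,0)$ uniformly on compact subsets of the attracting basin $\Omega$ to fix an integer $K$ with $F^K\big(\phi_\infty(\{|\zeta|\leq 1/2\})\big)\subseteq\Omega_{2/3}$. The uniform convergence $\phi_n\to\phi_\infty$ on $\{|\zeta|\leq 1/2\}$, together with continuity of $F^K$ and openness of $\Omega_{2/3}$, then yields $F^K(p_n)=F^K(\phi_n(0))\in\Omega_{2/3}$ for all sufficiently large $n$, i.e., $p_n\in\Omega_K$. For $n>K$ this contradicts $p_n\notin\Omega_{n-1}$, completing the proof. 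The main obstacle is precisely the implication $\phi_\infty(\triangle)\subseteq\overline{\Omega}\Rightarrow\phi_\infty(\triangle)\subseteq\Omega$, equivalently the absence of non-trivial analytic discs in $\partial\Omega$; this has to be handled carefully, e.g., by appealing to standard properties of the Julia set of $F$ under the standing hypotheses on $a,b,c$ (so that no slice-wise branch of $\partial\Omega$ is itself a holomorphic curve).
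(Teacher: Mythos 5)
Your reading of the statement (as a claim about points of $\Omega_n\setminus\Omega_{n-1}$, i.e.\ points whose orbit first enters $\Omega_{2/3}$ at time $n$) is the right one, and the Montel set-up is unobjectionable; but the step you yourself single out as the key one rests on a false fact, and it is false exactly in the situation you need to exclude. In $\mathbb{C}^2$ the Julia set of a polynomial skew product, and in particular the boundary of an attracting basin, can contain non-constant holomorphic discs, and for this very map it does: for any $z_*\in\partial\Omega_P$ the vertical disc $\{z_*\}\times\{|w|<2/3\}$ lies in $\partial\Omega$, since $(z',w)\in\Omega_{2/3}\subseteq\Omega$ for all $z'\in\Omega_P$ close to $z_*$, while $(z_*,w)\notin\Omega$ because the $P$-orbit of $z_*\in\partial\Omega_P$ does not converge to $0$; moreover the paper's next lemma shows that $\partial\Omega$ over the basin in $z$ is laminated by holomorphic graphs $w=f_\alpha(z)$. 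So ``$\partial\Omega$ contains no non-constant analytic disc'' is simply not available. Concretely, under your contradiction hypothesis nothing prevents $\tilde z_n$ from converging to a point of $\partial\Omega_P$; then $\phi_\infty(\triangle)$ is a vertical disc contained in $\partial\Omega$, the center $\phi_\infty(0)$ is not in $\Omega$, no $K$ with $F^K(\phi_\infty(0))\in\Omega_{2/3}$ exists, and your argument produces no contradiction. (Note also that all you actually use of the disc is that its center lies in $\Omega$, so the thickening buys you nothing toward forcing the limit off the boundary; and invoking the lamination to exclude vertical boundary discs over interior $z$ would be circular in the paper's architecture, since the lamination lemma there is proved using the present lemma.)

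The paper's own proof avoids this issue entirely by a quantitative expansion estimate, which is the ingredient your argument is missing: on $\{|w|>2/3\}$ one has $|\partial Q/\partial w|=|2w+c|\geq 4/3-|c|>7/6$, and $F$ maps each slice $\Omega_z$ into $\Omega_{P(z)}$. Hence if a point whose first $n-1$ iterates stay outside $\Omega_{2/3}$ sat at Euclidean distance at least $\epsilon$ from $\partial\Omega_{\tilde z_n}$, its forward images would sit at distance roughly at least $(7/6)^l\,\epsilon$ from the boundaries of the corresponding slices, which for large $n$ exceeds the uniform bound on slice diameters ($|w|$ is bounded on $\Omega$), a contradiction. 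This estimate is completely insensitive to whether the $z$-coordinate approaches $\partial\Omega_P$, which is precisely where the soft compactness argument breaks down; to salvage your normal-families framing you would still need an input of this expanding type (or an independently established lamination of $\partial\Omega$ transverse to the vertical direction) to push the limit disc off the boundary.
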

			
			\begin{proof}
				Suppose that for some $\epsilon>0,$ there exists arbitrarily large $N_1$ such that, in $\Omega_{\tilde{z}_{N_1}},$ the Euclidean distance $d_E(\tilde{w}_{N_1}, \partial\Omega_{\tilde{z}_{N_1}})>\epsilon.$ Then since $\big|\frac{\partial Q}{\partial w}\big|>2|w|-|c|>\frac{7}{6}>1$ for $|w|>\frac{2}{3},$ it follows that the distance  $d_E(\tilde{w}_{N_1-1}, \partial\Omega_{\tilde{z}_{N_1-1}})>\frac{7}{6}\epsilon.$ Repeating this  for large $l$ times, we have $d_E(\tilde{w}_{N_1-l}, \partial\Omega_{\tilde{z}_{N_1-l}})>\frac{7}{6}\epsilon^l\geq 4.$ It will get a contradiction to $d_E(\tilde{w}_{N_1-l}, \partial\Omega_{\tilde{z}_{N_1-l}})$ bounded by $4.$ 
				
			\end{proof}

			\begin{lem}
			Let $D:=\partial\Omega\cap\{(z, w); z\in P(z)\}$. Then $D$ is laminated by holomorphic graphs $w=f_\alpha(z).$ Moreover, the Kobayashi distance $d_\Omega((z_N, 0), (\tilde{ z}, \tilde{w}))\geq C$ for any $(z_N, 0)\in \Omega_{z_N}, (\tilde{ z}, \tilde{w})\in\cup_{k}^{\infty}\{F^{-k}(0, 0)\}\subset\Omega_{z_k}, k\geq0, N\geq N(C).$
		\end{lem}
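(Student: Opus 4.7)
My plan is to adapt the pullback--graph construction from Theorem \ref{thm5} to the skew product $F(z,w)=(z^2+az,w^2+cw+bz)$, using the four preceding lemmas as the main inputs. First I would build the lamination of $D$ together with a convenient single-valued holomorphic function on $\Omega$, then close the argument with the punctured-disk Kobayashi estimate already used in the general-$a$ part of Theorem \ref{thm5}.

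For the lamination, I would start with the horizontal graph $\sigma_0=\{(z,2/3):z\in\Omega_P\}$, which lies in $\Omega_{2/3}\subset\Omega$ by the first lemma of this subsection. Pull back iteratively: each $\sigma_{n-1}$ lifts to a (possibly multivalued) graph $\sigma_n=\{(z,f_n(z))\}$ determined by the quadratic $w^2+cw+bz=f_{n-1}(P(z))$. The simply-connected-slice lemma lets me track the branches coherently, and the product device from the general-$a$ half of Theorem \ref{thm5} handles the critical locus $w=-c/2$, so on a nested exhaustion $D_{n+1}\subset\subset\Omega$ I obtain single-valued holomorphic branches $f_{n,i}$. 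Extracting a normal-family limit $f_\infty$ via Montel, and invoking the Euclidean-shrinkage lemma $d_E(\tilde w_n,\partial\Omega_{\tilde z_n})\to 0$, the limit graphs $\{(z,f_{\infty,i}(z))\}$ lie in $\partial\Omega$, giving the claimed lamination of $D$.

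Next I would manufacture a single-valued bounded holomorphic function vanishing precisely on the lamination by setting $\hat h(z,w)=\prod_i\bigl(w-f_{\infty,i}(z)\bigr)$, as in Theorem \ref{thm5}. Then $\hat h$ is holomorphic on $D_{n+1}$ with image in a punctured disk $\triangle(0,R)\setminus\{0\}$, since $\Omega$ is bounded and $\hat h\to 0$ as $w$ approaches a boundary branch. The crucial input is the non-landing lemma $(0,0)\notin F^n(\Omega_{z_N,2/3})$ for $n\geq 1$, which forces every $(\tilde z,\tilde w)\in F^{-k}(0,0)\cap D_{n+1}$ to sit off $D$, so $|\hat h(\tilde z,\tilde w)|$ admits a positive lower bound depending only on $k$ and $n$. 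By the distance-decreasing property of $\hat h:D_{n+1}\to\triangle(0,R)\setminus\{0\}$ together with the localization lemma at the end of Theorem \ref{thm5},
\[
d_\Omega\bigl((z_N,0),(\tilde z,\tilde w)\bigr)\;\gtrsim\;\Bigl|\ln\bigl|\ln(|\hat h(z_N,0)|/R)\bigr|-\ln\bigl|\ln(|\hat h(\tilde z,\tilde w)|/R)\bigr|\Bigr|.
\]
To make the right-hand side blow up as $N\to\infty$ I need $|\hat h(z_N,0)|\to 0$, which follows again from the Euclidean-shrinkage lemma: the forward orbit of $(z_N,0)$ lands deep inside $\Omega_{z_{N-j},2/3}$, so one distinguished branch $f_{\infty,i}(z_N)$ must approach $0$. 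The two regimes $k\leq k_0(C)$ (finitely many preimages, uniformly away from $D$) and $k>k_0(C)$ (where the base projection $\pi:\Omega\to\Omega_P$ and $\tilde z\to\partial\Omega_P$ already force large Kobayashi distance, as in the first paragraph of the proof of Theorem \ref{thm5}) are handled separately.

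The main obstacle I anticipate is the branching. The discriminant $c^2-4bz+4f_{n-1}(P(z))$ of the pullback quadratic depends on $z$ and moves with every iteration, so the number of sheets of $f_n$ and the location of the critical locus both grow with $n$. Keeping $\hat h$ nontrivial, bounded, and single-valued on a sufficiently large $D_{n+1}$, while simultaneously identifying one branch whose value $f_{\infty,i}(z_N)$ is driven to $0$ by the shrinkage lemma, is the genuinely delicate bookkeeping; once it is in place, the punctured-disk Kobayashi estimate is a direct reuse of the machinery from Theorem \ref{thm5}.
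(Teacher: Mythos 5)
Your proposal breaks at the decisive step: the punctured-disk estimate is set up backwards. In the paper's argument the quantity that tends to $0$ is $H(\tilde z,\tilde w)$, where $H(z,w)=w-f_\alpha(z)$ and $(\tilde z,\tilde w)\in F^{-k}(0,0)$: by the Euclidean-shrinkage lemma (Lemma \ref{lem2}) these preimages approach the fiber boundaries, hence the boundary lamination, as $k\to\infty$, while the test point $(z_N,0)$ stays inside $\Omega_{2/3}$, whose closure misses the lamination (the leaves are limits of pullbacks of the circles $\{|w|=\tfrac{2}{3}\}$ and stay outside $\{|w|<\tfrac{2}{3}\}$ because $F(\Omega_{2/3})\subseteq\Omega_{2/3}$), so $|H(z_N,0)|$ is bounded away from $0$. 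You claim the opposite on both counts: that the preimages of $(0,0)$ ``sit off $D$'' with a positive lower bound on $|\hat h(\tilde z,\tilde w)|$, and that $|\hat h(z_N,0)|\to 0$ because ``one distinguished branch $f_{\infty,i}(z_N)$ must approach $0$.'' The latter is false: the shrinkage lemma concerns points of $F^{-n}(\Omega_{2/3})$ being pushed toward $\partial\Omega_{\tilde z_n}$, not points such as $(z_N,0)$ that already lie in $\Omega_{2/3}$, and no leaf value can come near $0$. With your assignment of which value degenerates, the quantity $\bigl|\ln|\ln y'/R|-\ln|\ln x'/R|\bigr|$ does not blow up, so no lower bound on $d_\Omega$ is obtained. (The non-landing lemma's actual role is only to guarantee that the test points are not themselves preimages of $(0,0)$.)

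Your regime split is also transplanted incorrectly from Theorem \ref{thm5}: there the test point had base coordinate $0$ fixed in the interior of the base, so $\tilde z\to\partial\triangle$ for large $k$ forced a large base distance. Here the test point's base coordinate $z_N$ itself tends to $\partial\Omega_P$, so for large $k$ both base coordinates are near $\partial\Omega_P$ and the projection argument yields nothing; the paper uses the projection only for bounded $k$ (where $\tilde z=z_k$ is one of finitely many interior points and $z_N\to\partial\Omega_P$ gives $d_{\Omega_P}(z_k,z_N)\geq C$), and the fiber estimate above for large $k$. Finally, the multivalued/product machinery and localization you import from the general-$a$ case of Theorem \ref{thm5} are not needed here, and they are exactly the ``delicate bookkeeping'' you leave unresolved: since $0<|b|,|c|\ll 1$ and $|z|<2$, the discriminant $c^2-4bz-\frac{8}{3}e^{i\theta}$ (and its inductive analogues) never vanishes, so each pullback of the circle family $w=\frac{2}{3}e^{i\theta}$ splits into two disjoint single-valued graphs, the limit leaves $f_\alpha$ are global graphs over $\Omega_P$, and $H(z,w)=w-f_\alpha(z)$ is single-valued on all of $\Omega$. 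Note also that you must start from the whole circle family, not the single graph $w\equiv\tfrac{2}{3}$, both to laminate $D$ and to be able to choose the leaf $f_\alpha$ passing through the limit point $\eta$ of $\tilde w$.
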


		\begin{proof}

		The set $\partial\Omega_{2/3}$ is laminated by graphs $w=\frac{2}{3}e^{i\theta}.$ Then we take $F^{-1}\big(w=\{\frac{2}{3}e^{i\theta}\}\big)= \frac{-c\pm \sqrt{c^2-4bz-\frac{8}{3}e^{i\theta}}}{2}:=f^{1, 2}_1.$ It is obvious that $c^2-4bz-\frac{8}{3}e^{i\theta}\neq 0$ since $0<|b|, |c|<<1, |z|<2.$ 
	  Hence $F^{-1}\big(w=\{\frac{2}{3}e^{i\theta}\}\big)$ always has two disjoint preimages. Then we can use $f^{1, 2}_1$ to laminate $F^{-1}(\partial\Omega_{2/3})$. 
	  
	  Inductively, we can use $f^{1, 2}_j$ to laminate $F^{-j}(\partial\Omega_{2/3}), j\geq 2$.
	  We know that  $F^{-(j-1)}(\partial\Omega_{2/3})$ is laminated by $f^{1, 2}_{j-1}$, hence $w=f^{1,2}_{j-1}(z)$ are graphs inside $ F^{-(j-1)}(\partial\Omega_{2/3}).$ 
	  Then we calculate $F^{-1}(w=f^{1, 2}_{j-1}(z)):$ let
	  $$(Z, W)\in F^{-1}(w=f^{1, 2}_j(z)) ~~\text{i.e.,} ~~F(Z, W)\in(w=f^{1, 2}_j(z)).$$ Hence 
	  $$Z^2+aZ=z, W^2+cW+bZ=w,$$ 
	  then 
	  $$W^2+cW+bZ=f_j(z)=f_j(Z^2+aZ),$$ 
	  $$W^2+cW+bZ-f_j(Z^2+aZ)=0,$$ 
	  thus, 
	  $$W=\frac{-c\pm\sqrt{c^2-4bZ+4f_j(Z^2+aZ)}}{2}=f^{1, 2}_{j+1}(Z).$$ 
	  Then let $f_\alpha(z):=\lim_{j\rightarrow\infty}F^{-j}\big(w=\{\frac{2}{3}e^{i\theta}\}\big).$ Therefore, $D$ is laminated by graphs $w=f_\alpha(z).$ 

			Next, we need to show in any different slices $\Omega_{z_N},$ and any $(\tilde{z}, \tilde{w})$ in any slices $\Omega_{z_k}$, we always have $d_\Omega((z_N, 0), (\tilde{ z}, \tilde{w}))\geq C$ as long as $(\tilde{ z}, \tilde{w})\rightarrow\partial\Omega$. Here we choose $N$ sufficiently large. Note that the Kobayashi distance $d_{\Omega_{P}}(z_k, z_N)\geq C$, if $z_N\rightarrow\partial\Omega_P$ for a fixed $k$ (See \cite{RefW}).  So we can assume both $k$ and $N$ are very large. However, by Lemma \ref{lem2}, $\tilde{w}$ is very close to some point denoted by $(\tilde{ z}, \eta)$ in $\partial\Omega_{\tilde{ z}}.$
		
			Let $H:\Omega\rightarrow \triangle(0, R)\setminus\{0\}, H(z, w)=w-f_\alpha(z)$. Here we choose $\alpha$ so that $f_\alpha(\tilde{z})=\eta.$ Then $H(z, w)$ is holomorphic on $\Omega.$ 
			And we know that $H(z, w)=w-f_\alpha(z)$, then $\lim_{w\rightarrow f_\alpha(z)}H(z, w)=0.$ Hence $H(z, w)$ is vanishing at $(\tilde{z}, \eta)$ and $H(\Omega) \subset \triangle(0, R)\setminus\{0\}$ for some constant radius $R>1$ since $\Omega$ is a bounded set. 
			
			Then let $x':=H(z, w)$ and $y':=H(\tilde{ z}, \tilde{w})$. 
			Then 
			\begin{equation*}
				\begin{aligned}
					d_{\Omega}((z, w), (\tilde{ z}, \tilde{w}))&\geq d_{\triangle(0, R)\setminus\{0\}}(x', y')\\
					&=\bigg|\int_{x'}^{y'}\frac{1}{|z| \ln \frac{|z|}{R}}dz\bigg|\\
					&=|\ln(|\ln y'/R|)-\ln(|\ln x'/R|)|\\
					&\rightarrow\infty,
				\end{aligned}
			\end{equation*}
			as $x'\rightarrow0, i. e., w\rightarrow f_\alpha(z).$  
					\end{proof}
				
			

		Now we continue to prove Theorem \ref{thm2} using the same method as in Theorem \ref{thm5}. We take $(z_0, w_0)=(z_N, 0)\in\Omega_{z_N, 2/3}.$ Then $H(\Omega_{2/3})\subset \triangle(0, R)\setminus \{0\}, H(\tilde{z}, \tilde{w})\rightarrow  0$ as $k\rightarrow\infty.$
					
		Therefore, we know that there exists a point $(z_0, w_0)=(z_N, 0)$ so that for any $(\tilde{z}, \tilde{w})\in \cup_{k}^{\infty}\{F^{-k}(0, 0)\}, k\geq0$,
		the Kobayashi distance $d_{\Omega}((z_0, w_0), (\tilde{z}, \tilde{w}))=d_{\Omega}\big((z_N, 0), (P^{-i}(0), Q^{-i}(0))\big)\geq d_{\triangle(0, R)\setminus\{0\}}(x', y')\geq C$ for all $i\in\mathbb{N}$.  
				\end{proof}

\small University of Parma, Department of Mathematical, Physical and Computer Sciences, Parco Area delle Scienze, 53/A, 43124 Parma PR, Italy

		\emph{Email address: mi.hu@unipr.it}

\begin{thebibliography}{}
		
		\bibitem{RefA} M. Abate, "Holomorphic Dynamics on Hyperbolic Riemann Surfaces", Berlin, Boston: De Gruyter, 2023.
		
		
		
		
		
		
		\bibitem{RefB} A. F. Beardon, "Iteration of Rational Functions", Springer-Verlag, New York, 1991.
		
		
	\bibitem{RefBGN} F. Bracci, H. Gaussier, N. Nikolov and P. J Thomas, "Local and global visibility and Gromov hyperbolicity of domains with respect to the Kobayashi distance", arXiv preprint arXiv:2201.03070, 2022.
		
		
		\bibitem{RefCG} L. Carleson and T. W. Gamelin, "Complex dynamics", Springer-Verlag, New York, 1993.   
		
		
		
					
			
				
					
		\bibitem{RefH1} M. Hu, "Interior Dynamics of Fatou Sets", arXiv preprint arXiv:2208.00546, 2022.
		
		\bibitem{RefH2} M. Hu, "Dynamics inside Parabolic Basins", arXiv preprint arXiv:2208.03756, 2022.
			\bibitem{RefJM} M. Jonsson, "Dynamics of polynomial skew products on $\mathbb{C}^2$", Mathematische Annalen, {\bf 314} (1999), 403–447.
		
		\bibitem{RefJ} J. Raissy, "Polynomial skew-products in dimension 2: Bulging and Wandering Fatou components", 
		Bollettino dell’Unione Matematica Italiana, {\bf 10} (2017), 441–450. 
		
		\bibitem{RefK} S. Krantz, "The Caratheodory and Kobayashi Metrics and Applications in Complex Analysis", The American Mathematical Monthly, {\bf 115} (2008), 304-329.
		
			\bibitem{RefL} K. Lilov, "Fatou theory in two dimensions", PhD thesis, University of Michigan, 2004.
			
		
			
		\bibitem{RefM} J. Milnor, "Dynamics in One Complex Variable", Princeton University Press, Princeton, 2006. 
		
			
		\bibitem{RefFS1} J. E. Forn\ae ss and N. Sibony, "Complex dynamics in higher dimension I", Astérisque {\bf 222} (1994), 201-231.
		
		 	\bibitem{RefFS3} J. E. Forn\ae ss and N. Sibony, "Classification of recurrent domains for some holomorphic maps", Math. Ann., {\bf 301} (1995), 813-820.
		 	
		\bibitem{RefFS2} J. E. Forn\ae ss and N. Sibony, "Complex dynamics in higher dimension II", Modern Methods in Complex Analysis (AM-137), {\bf 137} (1996), 135-182.
 

		
		\bibitem{RefS} D. Sullivan, "Quasiconformal homeomorphisms and dynamics I. Solution of the Fatou-Julia problem on wandering domains", Annals of mathematics, {\bf 122} (1985), 401–418. 
		
		\bibitem{RefW} E.F. Wold, "Asymptotics of invariant metrics in the normal direction and a new characterisation of the unit disk", Math. Z. {\bf 288} (2018), 875–887.
		
		
		
		%
		%
	\end{thebibliography}
\end{document}